\theoremstyle{plain}
\newtheorem{theorem}{Theorem}[section]
\newtheorem{lemma}[theorem]{Lemma}
\newtheorem{corollary}[theorem]{Corollary}
\theoremstyle{definition}
\numberwithin{equation}{section}
 \let\pL=\L
 \def\kn{\kern.1em}
\def\A{\mathbf{A}}
\def\B{\mathbf{B}}
\def\V{\mathbf{V}}
\def\P{\mathcal{P}}
\def\fr{\mathit{Fr}}
\def\cont{\mathfrak{c}}
\def\C{\mathfrak{C}}
\def\F{\mathfrak{F}}
\def\L{\mathfrak{L}}
\def\M{\mathfrak{M}}
\def\Cm{\mathsf{Cm} \kn}
\def\Em{\mathsf{Em} \kn}
\def\Im{\mathsf{I}\mathsf{m}}
\def\Cf{\mathbb{C} }    
\def\lk{{L_\k}}
\def\lo{{L_\om}}
\def\k{\kappa}
\def\om{\omega}
\def\ph{\phi}
\def\sig{\sigma}
\def\sub{\subseteq}
  \def\tprod{{\textstyle\prod}}
\begin{document}

\title{Fine's Theorem on First-Order\\  Complete Modal Logics}
\author{Robert Goldblatt\\
Victoria University of Wellington}
\date{}       
\maketitle

\begin{abstract}
Fine's influential Canonicity Theorem states that if a modal logic is determined by a first-order definable class of Kripke frames, then it is valid in its canonical frames. This article reviews the background and context of this result, and the history of  its impact on further research. It then develops a new characterisation of when a logic is canonically valid, providing a precise point of distinction with the property of first-order completeness. The ultimate point is that the   construction of  the canonical frame of a modal algebra does not commute with the ultrapower construction.
\end{abstract}

\section{The Canonicity Theorem and Its Impact}   \label{canthm}

In his PhD research, completed in 1969, and over the next half-dozen years,  Kit Fine made a series of fundamental contributions to the semantic analysis and metatheory of propositional modal logic,  proving general theorems about notable classes of logics and providing examples of failure of some significant properties. This work  included the following (in order of publication):
\begin{itemize}
\item
A study \cite{fine:prop70} of logics that have propositional quantifiers  and are defined semantically by constraints on the range of interpretation of the quantifiable variables as subsets of a Kripke model. Axiomatisations were given for cases where the range of interpretation is either the definable subsets, or an arbitrary Boolean algebra of subsets. Non-axiomatisability was shown for some cases where the range includes all subsets and the underlying propositional logic is weaker than S5. Decidability and undecidability results were also proved.
\item
A model-theoretic proof \cite{fine:logi71} of Bull's theorem (originally proved algebraically) that all normal extensions of S4.3 have the finite model property. It was also shown that these logics are all finitely axiomatisable and decidable,  and a combinatorial characterisation was given of the lattice of extensions of S4.3 which showed that it is countably infinite.
\item
Construction \cite{fine:logi72} of a  modal logic extending S4, and a superintuitionistic propositional logic,  that are finitely axiomatisable and lack the finite model property. Previously Makinson \cite{maki:norm69} had constructed a sublogic of S4  with these characteristics.
\item
Axiomatisations of logics with `numerical' modalities $M_k$, for positive integer $k$, meaning `in at least $k$ possible worlds' \cite{fine:inso72}. This topic later became known as \emph{graded} modal logic.
\item 
Exhibition of a logic extending S4 that is incomplete for validity in its Kripke frames \cite{fine:inco74}.   This paper and  one of
S.~K.~Thomason  \cite{thom:inco74} independently provided the first examples of incomplete modal logics. Thomason's was a sublogic of S4, following his earlier  discovery of an incomplete tense logic  \cite{thom:sema72}.
\item
A proof \cite{fine:asce74} that the lattice of logics extending S4 is uncountable and includes an isomorphic copy of the powerset 
$(\mathcal{P}(\om),\sub)$ of the natural numbers, ordered by inclusion.
\item
An extensive study \cite{fine:logi74} of the model theory of logics that extend the system K4, i.e.\ their Kripke frames are transitive. This included a proof of Kripke-frame completeness  for any such logic whose frames have a fixed bound on their `width', or degree of branching.   Another aspect of this project that was published later \cite{fine:logi85} focused on the \emph{subframe logics}, each of which is determined by a class of frames that is closed under subframes. There are uncountably many of them, and all were shown to  have the finite model property, and other features that will be mentioned later.
\item
A theory \cite{fine:norm75} of normal forms in modal logic, leading to a proof that all members of a certain class of `uniform' logics have the finite model property, hence are Kripke-complete, and are decidable if finitely axiomatisable. In particular, this included the smallest logic containing the well-known  McKinsey axiom $\Box\Diamond p\to\Diamond\Box p$, for which no such results had been available.

\end{itemize}
My interest here is in what was arguably the most influential contribution: the paper \emph{Some connections between elementary and modal logic} 
\cite{fine:conn75}, and in particular its Theorem 3, which will be referred to as \emph{Fine's Canonicity Theorem.} It states that
\begin{quote}
\em any logic that is complete with respect to a first-order definable class of Kripke frames must be valid in its canonical frames. 
\end{quote}
\noindent
These canonical frames have maximally consistent sets of formulas as their members (`possible worlds'), and their use   is an extension of the famous method of completeness proof introduced by Henkin for first-order logic and the theory of types 
\cite{henk:comp49, henk:comp50, henk:disc96}. Prior to \cite{fine:conn75}, propositional modal logics were typically taken to be based on a denumerably infinite set of variables, but Fine took the step of allowing languages to have arbitrarily large sets of variables, from which arbitrarily large canonical frames can be built for any given logic.

The above body of work by Fine can be seen as part of a second wave of research that flowed from the publication by Kripke \cite{krip:sema63a} of his seminal work on the relational semantics of normal propositional modal logics. As is well known, one reason for the great success of Kripke's theory was that it provided models that were much easier to conceptualize, construct  and manipulate than the algebraic structures that had been used hitherto.
Many logics could be shown to be determined by classes of frames defined by simple \emph{first-order} conditions on a binary relation. Thus S4 is determined by the class of preorders (reflexive transitive relations) and by the class of partial orders (antisymmetric as well); S5 by the class of equivalence relations and by the class of universal relations; S4.3 by the class of linear orders; etc.

The first wave of research focused on exploring this phenomenon for various logics. The Henkin method was applied to relational semantics by a number of people, including Cresswell \cite{cres:henk67}, Makinson \cite{maki:comp66} and Lemmon and Scott \cite{lemm:inte66}. 
The latter defined a particular model  $\M_L$ for any logic $L$, based on the frame  $\F_L$ of all maximally $L$-consistent sets.  This model determines $L$: the formulas true in  $\M_L$ are precisely the $L$-theorems. Therefore, if $\F_L$ satisfies a certain condition $C$ that makes $L$ valid, i.e. makes the $L$-theorems true in all models on the frame, it follows that $L$ is not only determined by $\F_L$, but is also determined by the class of all frames satisfying $C$.
For example, the axioms of S4 ensure that the frame $\F_\mathrm{S4}$ is a preorder, and that suffices to imply that S4 is determined by validity in all preorders.

The adjective \emph{canonical} was attached to the model $\M_L$  by Segerberg 
\cite{sege:deci68,sege:essa71}.   Its underlying frame $\F_L$ is the \emph{canonical frame} of $L$, and
 $L$  itself may be called a \emph{canonical} logic if it is valid in $\F_L$.\footnote{This use of `canonical' will be refined later. See footnote \ref{noteLom}.}
By the end of the 1960's, numerous logics had been shown to be canonical, and hence complete for their Kripke-frame semantics, by showing that $\F_L$ satisfies some first-order definable conditions that validate $L$. Such a proof also shows that $L$ is \emph{first-order complete}, in the sense that it is complete for validity in some first-order definable class of frames.

The second wave moved beyond the characterisation of particular systems to consider metalogical questions about the nature of Kripke semantics itself and its relation to algebraic semantics and to other logical formalisms, notably first-order and monadic second-order predicate logic. Other contributors included S.~K.~Thomason, Gabbay, Esakia, van Benthem, Blok and myself.\footnote{See \S 6 of \cite{gold:math06} for a survey of this metatheory from the 1970's.} The earlier work had suggested a close connection between propositional modal logic and first-order logic, but it was recognised that the notion of validity in a frame is intrinsically second-order, since it refers to truth in all models on a frame, hence has the effect of allowing  propositional variables to range in value over arbitrary subsets of the frame. Thomason \cite{thom:redu75seco} gave a reduction of the full monadic second-order theory of a binary relation to the propositional logic of a single modality.\footnote{See \cite[\S 6.4]{gold:math06}  for a summary.}

Every logic that had been shown to be canonical had been done so in a way, explained two paragraphs ago, that also showed it to be first-order complete. Moreover, the only known examples of non-canonical logics had an axiom expressing a non-first-order property of frames. For example, Fine proved in \cite[page 38]{fine:logi74} the non-canonicity of the extension of S4.3 by the axiom
$$
\neg[p\land  \Box(p\to \Diamond (\neg p\land\Diamond p))].
$$
This axiom  is deductively equivalent to the more well known formula
$$
\Box(\Box(p\to\Box p)\to p)\to p, \footnote{Due to Soboci{\'n}ski and commonly known as Grz:  see \cite[pp.~168--169]{sege:essa71} for the origin of this terminology.}
$$
and is valid in a linearly ordered frame iff the ordering has no infinite strictly increasing sequence, a non-first-order condition. A weaker formula is
Dummett's Diodorean axiom
$$
\Box(\Box(p\to\Box p)\to\Box p)\to(\Diamond\Box p\to\Box p),
$$
which corresponds in  the ordering of the natural numbers to the non-first-order discreteness property  that between any two points there are only finitely many other points. Earlier, Kripke
\cite{krip:revi67}  had observed that Dummet's formula
is not preserved by the J\'onsson--Tarski representation of modal algebras. This is an algebraic formulation of the non-canonicity of this formula.

The appearance of Fine's Canonicity Theorem gave a theoretical explanation of the observed situation, since it showed that
\begin{quote}
\em every  first-order complete logic is canonical.  
\end{quote}
\noindent
Thus any non-canonical logic must fail to be determined by any first-order definable class of frames.

There was another result about canonicity in  \cite{fine:conn75}. Theorem 2 showed that if the class $\fr(L)$ of all frames that validate $L$ is closed under first-order equivalence of frames, and $L$ is complete for validity in $\fr(L)$, then $L$ is valid in its canonical frames. But it turned out that this result follows from the Canonicity Theorem 3. Johan van Benthem  made the arresting discovery that if  the class $\fr(A)$ of frames validating a single modal formula $A$ is closed under first-order equivalence, then has the ostensibly stronger property of being defined by a single first-order sentence. That prompted me to observe that  if a class of frames is closed under ultrapowers and certain modal-validity preserving operations, then it must be closed under ultraproducts. From this it can be shown, for any logic $L$, that if
$\fr(L)$ is closed under first-order equivalence, then it is definable by some set of first-order sentences.\footnote{van Benthem's original proof involved the use of first-order compactness, but in the published version \cite{bent:moda76form} he chose to give my structural argument about closure under ultraproducts. In \cite{gold:refl99} I was able to return the favour by publishing an account of his original proof.}
Therefore the hypotheses of Theorem 2 imply the hypothesis of Theorem 3 that $L$ is first-order complete.

Segerberg's review of  \cite{fine:conn75} in \cite{sege:revi} suggested that `the paper may well open up a new avenue of research in modal logic'. Indeed it did.
Its most significant innovation  was the use of Kripke models that are \emph{saturated} \cite[Chapter 5]{chan:mode73} when viewed as models for a first-order language having a binary predicate symbol and a set of monadic ones. Fine showed that if a model $\M$ for this language is sufficiently saturated, then it can be mapped by a modal-validity preserving transformation (\emph{p-morphism}, or\emph{ bounded morphism}) onto the canonical model of the modal theory determined by $\M$.

I was fortunate to have access to a preprint of \cite{fine:conn75} in late 1973 as I was completing a PhD thesis on the duality between Kripke models and modal algebras \cite{gold:meta74}. I soon saw that saturation of models could be applied to derive a theorem giving structural criteria for when a first-order definable class of frames is \emph{modally} definable, i.e.\ is the class of all frames that validate some set of modal formulas. This involved more  validity preserving constructions: \emph{inner} (or \emph{generated}) \emph{subframes} and \emph{disjoint unions}, as well as the \emph{ultrafilter extension}  of a frame 
$\F$. The latter is a frame $\mathfrak{ue\,F}$ whose points are the ultrafilters on the underlying set of $\F$, and is an analogue of the notion of canonical frame.  One version of my theorem states that:
\begin{equation}\label{gtthm}
\parbox{.8\linewidth}{ \em
A first-order definable class $\C$ of frames is modally definable if, and only if, $\C$ is closed under disjoint unions, images of bounded morphisms and inner subframes, and its complement is closed under ultrafilter extensions.}
\end{equation}
The motivation for such a result was the desire to understand more precisely the relationship between the modal and first-order formalisms. This led to the
question of which first-order definable properties of a binary relation are expressible by modal formulas, given the observed converse      phenomenon that many, but not all,  modal-definable classes were first-order definable. \eqref{gtthm} is reminiscent of the celebrated theorem of Birkhoff \cite{birk:stru35} that a class of algebras is definable by equations iff it is closed under homomorphic images, subalgebras and direct products. Indeed the proof of \eqref{gtthm} involved an application of Birkhoff's theorem via the duality theory.
The hypothesis of first-order definability  ensured that $\C$ itself is closed under ultrafilter extensions, because 
$\mathfrak{ue\,F}$ is the image of a bounded morphism defined on  a suitably saturated elementary extension $\F'$ of 
$\F$ -- a structural version of Fine's use of saturated models.

Now the extension $\F'$ here satisfies the same first-order sentences as $\F$, so
the assumption on $\C$ can be weakened to closure under first-order equivalence. In this form \eqref{gtthm} appeared in \cite{gold:axio75} and in the published version \cite{gold:meta76} of my thesis. It has now become known as the Goldblatt--Thomason Theorem (see \cite[p.~186]{blac:moda01} for background), and versions of it have been developed for other formalisms, including
hybrid languages \cite{cate:mode05},
graded modal languages \cite{sano:gold10}, and the logic of coalgebras \cite{kurz:gold07}.
A suitably saturated elementary extension of $\F$ can always be obtained as an \emph{ultrapower} of $\F$
 \cite[\S 6.1]{chan:mode73}, and so the hypothesis on $\C$ can be stated as closure under ultrapowers.
Ultimately it can be weakened to just require closure of $\C$ under ultrafilter extensions, as in \cite{kurz:gold07}, although doing  so severs the link with first-order logic and the original motivation for the theorem.

Saturation later came to play an important role in the study of \emph{bisimulation} relations between modal models. These relations generalise the properties of p-morphisms, and were introduced as \emph{p-relations} by van Benthem \cite{bent:moda76corr, bent:moda83}, who showed that the propositional modal language can be identified with the bisimulation--invariant fragment of the associated first-order language. This can be efficiently proved using saturated models (see \cite[p.~120]{blac:moda01} for the history).

Fine's Canonicity Theorem itself stimulated further research, including a reformulation of it as a result about \emph{varieties}, i.e.\ equationally-definable classes of algebras. Each modal algebra $\A$ has a \emph{canonical frame} $\Cf\A$ whose members are the ultrafilters of $\A$. The ultrafilter extension $\mathfrak{ue\,F}$ of a frame is just the canonical frame of the \emph{powerset algebra} of $\F$, i.e.\ the algebra of all subsets of $\F$. For each algebra $\A$, the powerset algebra of $\Cf\A$ itself is called the \emph{canonical embedding algebra} of $\A$, denoted $\Em\A$ (these notions are all fully explained in Section \ref{varieties} below). A \emph{canonical variety} is an equationally definable class $\V$ of algebras that is closed under these canonical embedding algebras, i.e.\ has $\Em\A$ in $\V$ whenever $\A$ is in $\V$.

Now for a class $\C$ of frames, let $\V_\C$ be the variety consisting of the modal algebras that satisfy those equations that are satisfied by the powerset algebras of all the frames in $\C$. $\V_\C$ is called  the variety \emph{generated by} $\C$. When $\C=\fr(L)$  for some logic $L$, then $\V_\C$ is a canonical variety if, and only if, $L$ is a canonical logic.
Fine's Theorem thus suggests the following algebraic assertion:
\begin{equation}  \label{VCthm}
\textit{if $\C$ is first-order definable, then $\V_\C$ is a canonical variety.}
\end{equation}
I gave a proof of this in \cite[3.6.7]{gold:vari89} that used results from universal algebra about the existence of subdirectly irreducible algebras in generated varieties. The proof applies when $\C$ is any class of similar relational structures, having any kind and number of finitary relations. In general a member of $\V_\C$ is then a \emph{Boolean algebra with operators}, or BAO, a notion introduced by 
J{\'o}nsson and Tarski \cite{jons:bool48,jons:bool51}, who first showed how $n+1$-ary relations on a set $X$ correspond to $n$-ary operations on the powerset algebra of $X$ (a Kripke frame falls under the special case $n=1$ of this). BAO's include many kinds of algebra that have been studied by algebraic logicians, including cylindric algebras \cite{henk:cyli71}, polyadic algebras \cite{halm:alge62},  relation algebras  \cite{jons:bool52, hirs:rela02}, and algebras for temporal logic, dynamic logic and other kinds of multi-modal logic.

In \cite{gold:clos91}  I gave a quite different second proof of \eqref{VCthm} that analysed the way in which the canonical structure $\Cf\A$ of each $\A$ in $\V_\C$ could be constructed from members of $\C$ by forming images of bounded morphisms, inner subframes and disjoint unions. This was studied further in \cite{gold:elem95}, leading to strengthenings of  \eqref{VCthm}  that gave information about other first-order definable classes that
generate $\V_\C$. In particular, if $\C$ is first-order definable, then:
\begin{itemize}\em
\item 
$\V_\C$ is also generated by some first-order definable class that includes the class $\{\Cf\A: \A\in\V_\C\}$ of canonical structures of members of $\V_\C$.
\item
$\V_\C$ is also generated by the  class of structures that satisfy the same first-order sentences as the canonical structure\/ $\Cf\A_{\V_\C}$, where 
$\A_{\V_\C}$ is the free algebra in $\V_\C$ on denumerably many generators.
\end{itemize}
This last result has an interesting interpretation when $\C$ is the class $\fr(L)$ of all $L$-frames for some modal logic $L$. Then $\A_{\V_\C}$ can be constructed as the Lindenbaum-Tarski algebra of $L$, and $\Cf\A_{\V_\C}$ can be identified with the canonical $L$-frame $\F_L$. The result implies the following:
\begin{quote}\em
if $ L$ is first-order complete, then it is determined by the class of frames that satisfy the same first-order sentences as does its canonical frame $\F_L$.
\end{quote}
In other words: if $L$ is determined by some first-order conditions, then it is determined by the first-order conditions that are satisfied by the canonical frame $\F_L$.

This conclusion was further refined in \cite{gold:elem93} and \cite{gold:quas01} by studying certain `quasi-modal'  sentences which are defined syntactically, and are the \emph{first-order} sentences whose truth is preserved by the key modal-validity preserving constructions of  bounded morphisms, inner subframes and disjoint unions. It turns out that  if $L$ is a first-order complete modal logic, then any frame that satisfies the same quasi-modal first-order sentences as $\F_L$ must validate $L$, and so $L$ is determined by the class of such frames. This is explained more fully at the end of Section \ref{strength}.

At the end of \cite{fine:conn75}, Fine asked two pertinent questions. One was whether the converse of the Canonicity Theorem is true: must a canonical logic be first-order complete? That was a very natural question, given that, as noted earlier,  all logics that had been shown to be canonical had been done so in a way that showed them to be first-order complete. We can formulate the question more generally as: must a canonical variety of BAO's be generated by some first-order definable class of relational structures? Over the years a considerable number of partial positive answers were found, showing that the answer is yes for various families of logics and varieties. A list of these appears in \cite[pages 189--190]{gold:erdo04}. It includes the \emph{subframe logics}, introduced by Fine in \cite{fine:logi85} where he showed that every canonical subframe logic having transitive frames must be first-order complete. Wolter \cite{wolt:stru97} later removing this transitivity restriction. But eventually, after three decades, it was found that the answer to Fine's converse question is negative in general. Uncountably many counter-examples were given in \cite{gold:erdo04} and \cite{gold:cano04}. So canonicity is not equivalent to first-order completeness. The main function of this paper is to give a structural analysis that accounts for their difference.

The other question raised at the end of \cite{fine:conn75}  was whether a logic that is validated by its canonical frame built from a countable language must be validated by its canonical frames built from larger languages. That remains an open problem.

\bigskip
The rest of this paper will set out the background to these ideas and discoveries in more detail, in order to present some new ones.  Separate characterisations will be given of the notions `first-order complete' and `canonical'  that gives a precise point of distinction between them (see \eqref{focchar} and \eqref{conchar}). If $\A$ is the denumerably-generated Lindenbaum-Tarski algebra of logic $L$, the distinction is between the canonical frame $\Cf(\A^{U})$ of an ultrapower $\A^{U}$ of $\A$, and the corresponding ultrapower $(\Cf\A)^U$ of the canonical frame of $\A$.  In forming ultrapowers and canonical frames, the order of formation matters. $\Cf(\A^{U})$ may be bigger than $(\Cf\A)^U$ (see Theorem \ref{bigger}). Moreover, although $(\Cf\A)^U$ is isomorphic to a subframe of $\Cf(\A^{U})$, this is not in general an \emph{inner} subframe,  and $L$ can be valid in $\Cf(\A^{U})$ but falsifiable in $(\Cf\A)^U$. 

Our principal result, providing the point of distinction, is that $L$ is first-order complete when
\emph{every ultrapower $(\Cf\A)^U$ validates $L$,}
whereas $L$ is canonical when
\emph{every frame $\Cf(\A^{U})$ validates $L$.}

The paper concludes by applying these observations to a simple proof of the converse of the Canonicity Theorem for subframe logics.

\section{Background}

This section briefly reviews what we need from the relational model theory of propositional modal logic (see \cite{blac:moda01} for more details).

From a given class of variables $p_\xi$, one for each ordinal $\xi$, \emph{formulas} are generated using Boolean connectives ($\bot$ and $\to$) and the modality $\Box$. The connectives $\neg$, $\land$, $\lor$, $\leftrightarrow$, 
$\Diamond$ are defined in the usual way. A \emph{logic} is any class $L$ of formulas that contains all instances of tautologies and of the scheme
$$
\Box(A\to B)\to(\Box A\to\Box B),
$$
and is closed under the rules of modus ponens, uniform substitution of formulas for variables, and necessitation (from $A$ infer $\Box A$). The members of $L$ are often called its \emph{theorems}.

There are in fact only $2^\om$ logics, since each is determined by its restriction to formulas whose variables belong to $\L_\om=\{p_\xi:\xi<\om\}$.   To explain this: for each infinite cardinal $\k$, define a \emph{$\k$-formula} to be  any formula whose variables belong to the set $\L_\k=\{p_\xi:\xi<\k\}$.  Then the $\k$-formulas form a \emph{set} of size $\k$.
A \emph{$\k$-logic} is any set of $\k$-formulas that fulfils the definition of a logic as above when restricted to $\k$-formulas, in particular being closed under uniform substitution of $\k$-formulas for variables. For any logic $L$, the set $L_\k$ of all 
$\k$-formulas that belong to $L$ is a $\k$-logic. Since $\L_\k$ is infinite, we can associate with any formula $A$ a substitution instance of it that is a $\k$-formula, and is an $L$-theorem iff $A$ is. This implies that $L$ is the closure of $L_\k$ under substitution, and is the only logic whose restriction to $\k$-formulas is equal to $L_\k$, i.e.\ for any logic $L'$ we have  $L_\k=L'_\k$ iff $L=L'$.
Similarly, $L_\k$ is the closure of $L_\om$ under substitution of $\k$-formulas, and is the only $\k$-logic whose restriction to $\om$-formulas is equal to $L_\om$.  Thus a logic $L$ is stratified into the increasing sequence 
$$
\{L_\k: \k \text{ is an infinite cardinal}\},
$$
of restricted logics, which is completely determined by $L_\om$ in the way described.

A \emph{frame} $\F=(X,R)$ consists of a binary relation $R$ on a set $X$. A \emph{$\k$-model} $\M=(X,R,V)$ on a frame is given by a \emph{valuation} function $V$ assigning to each variable $p\in\L_\k$ a set $V(p)\sub X$. The truth-relation $\M,a\models A$ (read `$A$ is true at $a$ in $\M$') is defined for all $a\in X$ by induction on the formation of $\k$-formulas $A$, with $\M,a\models p$ iff $a\in V(p)$; the Boolean connectives treated as expected; and
\begin{center}
$\M,a\models \Box A$ \enspace iff \enspace for all $b$ such that $aRb$, \,$\M,b\models A$.
\end{center}
A $\k$-formula $A$ is \emph{true in} $\M$, written $\M\models A$,  if $\M,a\models A$ for all $a\in X$. This property of $A$ depends only on the valuations $V(p)$ of the variables that occur in $A$.

An arbitrary formula $A$ is \emph{valid in} a frame $\F$, written $\F\models A$, if it is true in every $\k$-model on $\F$, for any $\k$ such that $A$ is a $\k$-formula. $\F$ \emph{validates} a class $\Delta$ of formulas, written $\F\models\Delta$,  if every member of $\Delta$ is valid in $\F$. $\F$ is an \emph{$L$-frame} if it validates the logic $L$. $\fr(L)$ is the class $\{\F:\F\models L\}$ of all $L$-frames.

For any frame $\F$, the class $L_\F=\{A:\F\models A\}$ of all formulas valid in $\F$ is a logic. If $\C$ is a class of frames, then the class $L_\C$ of formulas valid in all members of $\C$ is a logic, called the logic \emph{determined by} $\C$. A logic is \emph{complete} if it is determined by some class of frames, i.e.\ if it is equal to $L_\C$ for some class $\C$.

Note that if $\M$ is a $\k$-model, the set $\{A:\M\models A\}$ of $\k$-formulas true in $\M$ need not be a $\k$-logic, since it need not be closed under substitution. This explains the importance of the notion of frame-validity in modal logic.

A frame is also a model for the first-order language of a single binary predicate symbol $\bar R$. A class of frames is \emph{elementary} if it is first-order definable, i.e.\ if it is the class of all models of some set of sentences of this language. A logic $L$ will be called \emph{first-order complete} if it is determined by some elementary class of frames.\footnote{`First-order complete' is the same notion as `quasi-$\Delta$-elementary and complete' of \cite{fine:conn75}. It is also known as `elementarily determined'.}

Propositional variables can be regarded as monadic predicate symbols, so a modal $\k$-model is also a model for the first-order language with signature $\L_\k\cup\{\bar R\}$. Then the definition of the truth relation of $\M$ gives rise to a translation assigning to each modal $\k$-formula $A$ a first-order formula $A'$ with a single free variable, such that for each 
$a$ in $\M$,
\begin{equation}  \label{1storderdef}
\M,a\models A \quad\text{iff}\quad \M\models A'[a],
\end{equation}
i.e.\ $A$ is true at $a$ in $\M$ iff $A'$ is satisfied in the first-order model $\M$ when its free variable is assigned the value 
$a$ (see \cite[Theorem 1]{fine:conn75} or \cite[\S 2.4]{blac:moda01}).

The \emph{canonical $\k$-model} of a logic $L$ is the structure
$$
\M_{L_\k}=(X_\lk,R_\lk,V_\lk),
$$
where $X_\lk$ is the set of all maximally $\lk$-consistent sets of $\k$-formulas; $aR_\lk b$ iff $\{A:\Box A\in a\}\sub b$; and $V_\lk(p)=\{a\in X_\lk:p\in a\}$.  The \emph{$\k$-canonical frame} of $L$ is $\F_\lk=(X_\lk,R_\lk)$.

The canonical $\k$-model satisfies the \emph{Truth Lemma}
$$
\M_\lk,a\models A \quad\text{iff}\quad A\in a,
$$
for all $\k$-formulas $A$ and $a\in X_\lk$. This implies that $\M_\lk\models A$ iff $A$ is an $\lk$-theorem, so $\M_\lk$ is a determining model for $\lk$. From this it follows that for any formula $A$,  $\F_\lk\models A$ implies $A\in L$, i.e.\ $L$ is complete for validity in $\F_\lk$. But it may not be \emph{sound} for this validity: there may be $L$-theorems that are not valid in $\F_\lk$, i.e.\ $\F_\lk$ may not be an $L$-frame. The question of whether/when it is an $L$-frame is thus of some interest, and indeed is the leitmotif of this paper.
So we will say that $L$ is \emph{$\k$-canonical} if $\F_\lk$ is an $L$-frame, and is 
\emph{canonical} if it is $\k$-canonical for all infinite cardinals $\k$.\footnote{The frame $\F_L$ of Section \ref{canthm} is in fact $\F_{L_\om}$, and the historical discussion of that Section refers to $\om$-canonicity.\label{noteLom}}
The Canonicity Theorem then states that every first-order complete logic is canonical in the sense of being validated by its $\k$-canonical frame for all $\k$.

A frame $\F=(X,R)$ is a \emph{subframe} of frame $\F'=(X',R')$ if $X$ is a subset of $X'$ and $R$ is the restriction of $R'$ to $X$. Then $\F$ is an \emph{inner subframe}\footnote{Also called a `generated subframe'.} of $\F'$ if also $X$ is $R'$-closed in the sense that if $a\in X$ and $aR'b\in X'$, then $b\in X$. Inner subframes preserve validity: $\F'\models A$ implies $\F\models A$. They also have the following property:
\begin{equation}  \label{inner}
\parbox{.8\linewidth}{ \em
$\F$ is an $L$-frame if for each $a$ in $\F$ there is an inner subframe of $\F$ that contains $a$ and is an $L$-frame.
 }
\end{equation}

A \emph{bounded morphism}\footnote{Also called a `p-morphism'.}
from $\F=(X,R)$ to $\F'=(X',R')$ is a function $f:X\to X'$ such that $aRb$ implies $f(a)R'f(b)$, and $f(a)R'c\in X'$ implies $c=f(b)$ for some $b\in X$ with $aRb$. Then $f(\F)=(f(X),R'{\restriction} f)$ is an inner subframe of $\F'$, where $f(X)$ is the $f$-image of $X$ and  $R'{\restriction} f$ is the restriction of $R'$ to $f(X)$. Images of bounded morphisms preserve validity: if $\F\models A$ then $f(\F)\models A$. Thus if there exists a \emph{surjective} bounded morphism $f:\F\to\F'$, then $\F\models A$ implies  $\F'\models A$.

The third major modal-validity preserving operation is the \emph{disjoint union} $\coprod_I\F_i$ of a set $\{\F_i:i\in I\}$ of frames. This can be defined as the union of a set of pairwise disjoint isomorphic copies of the $\F_i$'s. Each frame $\F_i$ is isomorphic to an inner subframe of $\coprod_I\F_i$. We have 
$\coprod_I\F_i\models A$ iff for all $i\in I$, $\F_i\models A$.

Next to be reviewed is the definition of the \emph{ultraproduct}    
$$
\tprod_U\F_i =(\tprod_U X_i, R_U)
$$ 
of a set $\{\F_i=(X_i,R_i):i\in I\}$ of frames, modulo an ultrafilter $U$ of subsets of $I$.
$\tprod_U X_i$ is the set of equivalence classes of the Cartesian product set $\tprod_I X_i$ under the equivalence relation $f\sim_U g$ that holds iff
$\{i\in I:f(i)=g(i)\}\in U$. Writing $f_U$ for the equivalence class $\{g:f\sim_U g\}$ of $f$, the relation $f_UR_Ug_U$ is defined to hold iff $\{i\in I:f(i)R_ig(i)\}\in U$.

Given a $\k$-model $\M_i=(\F_i,V_i)$ on each frame $\F_i$, a valuation $V$ on the ultraproduct is defined by putting
$f_U\in V(p)$ iff $\{i\in I:f(i)\in V_i(p)\}\in U$. In the resulting model $\prod_U\M_i$, for all $\k$-formulas $A$, 
$$
\tprod_U\M_i,f_U\models A \quad\text{iff}\quad \{i\in I:\M_i,f(i)\models A\}\in U.     
$$

This follows via the corresponding result for first-order logic (\pL o\'{s}'s Theorem), and the relationship \eqref{1storderdef}. From this it can be shown that 
$$
\tprod_U\M_i\models A \quad\text{iff}\quad \{i\in I:\M_i\models A\}\in U,
$$
and that
\begin{equation} \label{rootpres}
\tprod_U\F_i\models A \quad\text{implies}\quad \{i\in I:\F_i\models A\}\in U
\end{equation}
(cf.~\cite{gold:firs75}).

Now when the frames $\F_i$ for all $i\in I$ are equal to a single frame $\F$, then the ultraproduct is called the \emph{ultrapower} of $\F$ modulo $U$, denoted $\F^U$, and $\F$ is the \emph{ultraroot} of $\F^U$. Then from \eqref{rootpres},  
$$
\F^U\models A \quad\text{implies}\quad \F\models A,
$$
so \emph{ultraroots preserve modal validity}.

Two frames $\F_1$ and $\F_2$ are \emph{elementarily equivalent}, written $\F_1\equiv\F_2$, if they satisfy the same first-order sentences. The profound
\emph{ Keisler--Shelah Ultrapower Theorem} \cite[6.1.15]{chan:mode73} states that elementarily equivalent structures have isomorphic ultrapowers, so if $\F_1\equiv\F_2$, then there exists some ultrafilter $U$ such that $\F_1^U$ is isomorphic to $\F_2^U$.

Let $El(\C)$ be the smallest elementary class including a class $\C$. $El(\C)$  is the class of all models of the first-order sentences that are true of all members of $\C$. It can be shown that if $\C$ is closed under ultraproducts, then $El(\C)$ is the closure
$$
\{\F: \F\equiv \F' \text{ for some }\F'\in \C\},
$$
 of $\C$ under elementary equivalence (see  \cite[\S 4.1]{chan:mode73} or \cite[\S 7.3]{bell:mode69}).
 
 \begin{theorem}\label{ElC}
If a class $\C$ of frames is closed under ultraproducts, then $\C$ and the elementary class $El(\C)$ determine the same modal logic.
\end{theorem}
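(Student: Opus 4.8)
The plan is to show that the logics $L_\C$ and $L_{El(\C)}$ coincide by a double inclusion, the nontrivial direction being $L_\C\subseteq L_{El(\C)}$. Since every member of $\C$ lies in $El(\C)$, it is immediate that $L_{El(\C)}\subseteq L_\C$. For the reverse, suppose $A\in L_\C$, so $\F\models A$ for every $\F\in\C$; I must show $\F'\models A$ for every $\F'\in El(\C)$. The key is the description of $El(\C)$ quoted just before the theorem: because $\C$ is closed under ultraproducts, $El(\C)$ is exactly the closure of $\C$ under elementary equivalence, i.e.\ every $\F'\in El(\C)$ satisfies $\F'\equiv\F$ for some $\F\in\C$.

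So fix $\F'\in El(\C)$ and pick $\F\in\C$ with $\F'\equiv\F$. By the Keisler--Shelah Ultrapower Theorem there is an ultrafilter $U$ with $\F'^{\,U}\cong\F^{\,U}$. Now $\F^{\,U}$ is an ultrapower of a member of $\C$; applying \eqref{rootpres} in the ultrapower case — or directly the displayed fact that $\F^{\,U}\models B$ implies $\F\models B$ is the wrong direction, so instead I use the forward implication from $\F\models L_\C$. Here is the cleaner route: since $A\in L_\C$ and $\F\in\C$, we have $\F\models A$. Ultraproducts of frames validating $A$ validate $A$: indeed from the model-level equivalence $\tprod_U\M_i,f_U\models A$ iff $\{i:\M_i,f(i)\models A\}\in U$ one derives, as recorded in the excerpt, that $\tprod_U\F_i\models A$ whenever each $\F_i\models A$ (take all $\F_i=\F$, all $V_i$ arbitrary, so every $\M_i\models A$, hence $\tprod_U\M_i\models A$; as the valuation was arbitrary, $\F^{\,U}\models A$). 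Thus $\F^{\,U}\models A$, and via the isomorphism $\F'^{\,U}\cong\F^{\,U}$ we get $\F'^{\,U}\models A$ (isomorphisms obviously preserve validity). Finally ultraroots preserve validity — the displayed implication $\F'^{\,U}\models A$ implies $\F'\models A$ — so $\F'\models A$, as required.

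The main obstacle, such as it is, is bookkeeping about which direction each preservation fact runs: the excerpt gives us that \emph{ultraroots} preserve validity (from $\F^U\models A$ down to $\F\models A$) and that \emph{ultraproducts of validating frames} validate $A$ (the model-theoretic \pL o\'s-style computation). I need to invoke the latter to push validity \emph{up} into the ultrapower $\F^{\,U}$ of the chosen $\F\in\C$, then transport across the Keisler--Shelah isomorphism, then invoke the former to come back \emph{down} to $\F'$. No single ingredient is hard; the only care needed is to feed Keisler--Shelah an ultrapower on each side (it only guarantees \emph{some} common ultrafilter, which is exactly what is used). Closure of $\C$ under ultraproducts is what licenses the structural description of $El(\C)$ and is used only through that description; it is not needed again once we have the elementary-equivalence representation of members of $El(\C)$.
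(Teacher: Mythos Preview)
Your argument has a genuine gap at the step ``ultraproducts of frames validating $A$ validate $A$''. This is false in general: frame validity is a $\Pi^1_1$ (second-order universal) condition, and such conditions are preserved by ultra\emph{roots} but not by ultra\emph{powers}. Your justification --- choose arbitrary valuations $V_i$ on the factors, deduce $\prod_U\M_i\models A$, and conclude $\F^{\,U}\models A$ ``as the valuation was arbitrary'' --- fails because the valuation on $\F^{\,U}$ obtained from the $V_i$'s is not an arbitrary valuation: it assigns to each variable only an \emph{internal} subset of $\F^{\,U}$, i.e.\ one of the form $\{f_U:\{i:f(i)\in V_i(p)\}\in U\}$. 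An arbitrary model on $\F^{\,U}$ may assign non-internal subsets to variables, and $A$ can fail there. (Concretely: the L\"ob axiom is valid in $(\omega,>)$ but not in any nonprincipal ultrapower of it, since such an ultrapower is no longer conversely well-founded.) This is precisely why \eqref{rootpres} is stated only as a one-way implication.

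The repair is exactly the hypothesis you thought you no longer needed: closure of $\C$ under ultraproducts gives $\F^{\,U}\in\C$, whence $\F^{\,U}\models A$ follows immediately from $A\in L_\C$ --- no appeal to upward preservation required. With that correction your proof becomes the paper's proof verbatim: pick $\F'\equiv\F\in\C$, take Keisler--Shelah $U$ with $\F'^{\,U}\cong\F^{\,U}\in\C$, conclude $\F^{\,U}\models A$, transfer across the isomorphism, and descend by ultraroots. So your final remark that closure under ultraproducts ``is not needed again'' is exactly backwards: it is needed precisely here, and the description of $El(\C)$ as the elementary-equivalence closure is where it is used \emph{implicitly} (via closure under ultraproducts being part of the proof of that description).
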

 \begin{proof}
Let $A$ be a modal formula that is valid all members of $\C$. If $\F_1\in El(\C)$, then from above  $\F_1\equiv\F_2$ for some $\F_2\in\C$. By the Keisler--Shelah Theorem,  there is an ultrafilter $U$ such that $\F_1^U$ is isomorphic to $\F_2^U$. Then $\F_2^U\in\C$ by closure of $\C$ under ultraproducts, so $\F_2^U\models A$. Since modal validity is preserved by isomorphism and ultraroots, this gives $\F_1^U\models A$ and then $\F_1\models A$.

Thus a modal formula valid in all members of $\C$ is valid in all members of $El(\C)$. The converse is immediate as $\C\sub El\C$. Hence $L_\C=L_{El(\C)}$.
\end{proof}

\section{Strengthening the Canonicity Theorem}  \label{strength}

The core of Fine's proof of his Canonicity Theorem is the following fact:

\begin{lemma} \label{fineslemma}
If a logic $L$ is determined by an elementary class $\C$, then for all infinite $\k$ and any member $a$ of $\F_\lk$, there is a frame $\F_a\in\C$ and a bounded morphism $f_a:\F_a\to\F_\lk$ whose image $f_a(\F_a)$ contains $a$.
\qed
\end{lemma}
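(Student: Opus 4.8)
The plan is to build, for each maximally $\k$-consistent set $a \in X_\lk$, a single first-order structure that is both a member of $\C$ (or at least elementarily equivalent to one, which by Theorem~\ref{ElC} suffices to stay inside the logic) and rich enough to be mapped onto $\F_\lk$ by a bounded morphism covering $a$. The natural candidate is a \emph{saturated} model for the first-order language with signature $\L_\k \cup \{\bar R\}$. First I would observe that the $\k$-canonical model $\M_\lk$, viewed as a first-order structure for this signature, has the property that its complete first-order theory is consistent with the set of modal theorems of $L$ being ``true everywhere'' via the translation \eqref{1storderdef}; since $L$ is determined by $\C$, every frame in $\C$ validates $L$, so the modal content we need is available from $\C$. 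I would then pass to a sufficiently saturated elementary extension (or ultrapower, using \cite[\S6.1]{chan:mode73}) of some suitably chosen model drawn from $\C$, with monadic predicates interpreting the variables of $\L_\k$ so as to encode a maximal consistent set at the relevant point.

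The key steps, in order, would be: (1) Fix $a \in X_\lk$. Using the Truth Lemma, $a$ is realised in $\M_\lk$ as the type of formulas true at that point. (2) Produce a model $\M_a$ based on a frame $\F_a \in \C$ together with a valuation of the $\L_\k$-variables, such that $\M_a$ is $\k^{+}$-saturated as a first-order structure in the signature $\L_\k \cup \{\bar R\}$ and such that some point $x_a \in \F_a$ satisfies exactly the $\L_\k$-first-order translations of the formulas in $a$. This is where I would invoke that $\C$, being elementary, is closed under ultraproducts (and hence, by Theorem~\ref{ElC}, we may freely replace members of $\C$ by elementarily equivalent ones), so saturated extensions remain within the logic-determining class. (3) Define $f_a : \F_a \to \F_\lk$ by sending $x \in \F_a$ to the set $\{A : \M_a, x \models A'\}$ of modal $\k$-formulas whose translation holds at $x$; show this set is a maximally $\k$-consistent set, i.e.\ an element of $X_\lk$, using that the $A'$ respect the Boolean structure and that consistency is inherited. (4) Verify the two bounded-morphism conditions: the forth condition ($x R_a y \Rightarrow f_a(x) R_\lk f_a(y)$) follows directly from the definition of $R_\lk$ and the clause for $\Box$ in \eqref{1storderdef}; the back condition ($f_a(x) R_\lk c \Rightarrow \exists y\, (x R_a y \text{ and } f_a(y) = c)$) is the place where saturation is essential — one must realise, at some $R_a$-successor $y$ of $x$, the type that pins down the maximal consistent set $c$, which is a $\k$-sized (finitely satisfiable, by the back clause for $R_\lk$) set of formulas, hence satisfiable in a $\k^{+}$-saturated model. (5) Finally, check $a = f_a(x_a)$, so $a \in f_a(\F_a)$.

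The main obstacle is step (4), the back condition, and behind it the precise saturation bookkeeping of step (2). One has to ensure simultaneously that (i) the ambient structure really does come from $\C$ up to the operations that preserve the determined logic, (ii) it carries an interpretation of \emph{all} $\k$ monadic predicates $p_\xi$ that is coherent with being canonical-model-like at the base point, and (iii) it is saturated enough — $\k^{+}$-saturated — to realise the successor types needed to hit every $R_\lk$-neighbour of $f_a(x)$. Getting a single model that meets all three demands is the technical heart; the size bookkeeping (why $\k^{+}$-saturation is both attainable via an ultrapower and sufficient for $\k$-sized successor types) is the calculation I would defer. Everything else — that the map is well-defined into $X_\lk$, that the forth condition holds, that $a$ is in the image — is routine given \eqref{1storderdef} and the Truth Lemma.
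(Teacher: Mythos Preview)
Your approach is essentially Fine's, as the paper sketches immediately after the lemma. Two points deserve sharpening.

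First, you overestimate the saturation needed. The back-condition type $\{x\bar R y\}\cup\{A'(y):A\in c\}$ is a $1$-type over the \emph{single} parameter $x$, and $\om$-saturation already realizes every finitely satisfiable type over a finite parameter set---the cardinality of the type itself, and hence the size $\k$ of the language, plays no role in the definition of $\om$-saturation. Any countably incomplete ultrapower is $\aleph_1$-saturated regardless of signature size, so $\om$-saturation is both sufficient and cheaply attainable; the worry about ``$\k$-sized successor types'' requiring $\k^+$-saturation dissolves.

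Second, the step you label the technical heart but leave unspecified---producing a model on a frame in $\C$ with a valuation and a point realizing $a$---is exactly where the ultraproduct enters, and it is not a saturation argument at all. One indexes by the finite subsets $\Delta\sub a$; for each such $\Delta$ the conjunction $\bigwedge\Delta$ is $L$-consistent, hence (since $L$ is determined by $\C$) satisfied at some point of some model $\M_\Delta$ on a frame from $\C$; the ultraproduct of the $\M_\Delta$ over a suitable ultrafilter on this index set then carries a point realizing all of $a$, and its underlying frame lies in $\C$ because $\C$ is elementary. One then passes to an $\om$-saturated elementary extension (again realizable as an ultrapower, so the frame remains in $\C$). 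The map $f_a$ and the verifications in your steps (3)--(5) proceed exactly as you outline.
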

The desired conclusion that $\F_\lk$ is an $L$-frame follows directly from this, because   $f_a(\F)$ is an inner subframe of $\F_\lk$, and since $\F_a$ is an $L$-frame (being a member of $\C$) and images of bounded morphisms preserve validity, it follows that $f_a(\F)$ is an $L$-frame. Thus every member of $\F_\lk$ belongs to an inner subframe that is an $L$-frame, which implies that $\F_\lk\models L$ by \eqref{inner}.

In the proof of Lemma \ref{fineslemma}, a model $\M$ is constructed as an ultraproduct of models that are based on frames from $\C$, and then an arbitrary $\om$-saturated elementary extension $\M'$ of $\M$ is taken, with $\F_a$ being defined as the underlying frame of $\M'$. Now the frame of $\M$ is in $\C$, as $\C$ is closed under ultraproducts, and a suitable $\om$-saturated $\M'$ can always be realised as an ultrapower of $\M$ \cite[\S 6.1]{chan:mode73}, so the essential property of $\C$ required for the Lemma is that it be closed under ultraproducts. In other words, this reasoning shows that
\begin{quote}\em
if a class $\C$ of frames is closed under ultraproducts, then the logic $L_\C$ that  it determines is canonical.
\end{quote}
However this is not a genuine strengthening of the Canonicity Theorem, despite the \emph{apparently} weaker hypothesis on $\C$, because if $\C$ is closed under ultraproducts, then as shown in Theorem \ref{ElC}, $L_\C$ is also determined by the elementary class $El(\C)$, and so $L_\C$ is first-order complete.

The genuine strengthening we give below is obtained by deriving a stronger conclusion, rather than using a weaker hypothesis.
It is based on the following natural ultrapower generalisation of Lemma \ref{fineslemma}.

\begin{lemma}
If a logic $L$ is determined by an elementary class $\C$, then for all infinite $\k$, any ultrafilter $U$, and any member $a$ of the ultrapower $\F_\lk^{\ \,U}$, there is a frame $\F_a\in\C$ and a bounded morphism $f_a:\F_a\to\F_\lk^{\ \, U}$ whose image $f_a(\F_a)$ contains $a$.
\end{lemma}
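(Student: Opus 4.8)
The plan is to avoid re-running Fine's saturation argument and instead to derive the statement directly from Lemma~\ref{fineslemma}, transporting its witnesses through the ultraproduct construction. Two structural facts drive this: an elementary class is closed under ultraproducts, and an ultraproduct of bounded morphisms is again a bounded morphism. So fix an infinite $\k$, an ultrafilter $U$ on an index set $I$, and a point $a$ of $\F_\lk^{\,U}$, and pick $g:I\to X_\lk$ with $a=g_U$. For each $i\in I$ the point $g(i)$ lies in $\F_\lk$, so Lemma~\ref{fineslemma} supplies a frame $\F_i=(X_i,R_i)\in\C$, a bounded morphism $h_i:\F_i\to\F_\lk$, and a point $c_i\in X_i$ with $h_i(c_i)=g(i)$; this selection of one Fine witness per coordinate is a harmless use of choice.

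I would then put $\F_a:=\tprod_U\F_i$, which lies in $\C$ since $\C$, being elementary, is closed under ultraproducts, and define $f_a:\F_a\to\tprod_U\F_\lk=\F_\lk^{\,U}$ by $f_a(k_U)=\bigl(i\mapsto h_i(k(i))\bigr)_U$. This is well defined because $k\sim_U k'$ forces $h_i(k(i))=h_i(k'(i))$ for all $i$ in a $U$-large set. Its image contains $a$: the function $k$ with $k(i)=c_i$ for every $i$ satisfies $f_a(k_U)=(h_i(c_i))_U=(g(i))_U=g_U=a$. It remains only to verify that $f_a$ is a bounded morphism, which is where the real work sits.

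The forth condition is immediate: if $k_U$ and $l_U$ are related in $\F_a$, i.e.\ $k(i)\,R_i\,l(i)$ for all $i$ in some $U$-large set, then for those $i$ the morphism property of $h_i$ gives $h_i(k(i))\,R_\lk\,h_i(l(i))$, whence $f_a(k_U)\,R_U\,f_a(l_U)$ in $\F_\lk^{\,U}$. The back condition is the main obstacle. Suppose $f_a(k_U)\,R_U\,n_U$ in $\F_\lk^{\,U}$, so that $F=\{i: h_i(k(i))\,R_\lk\,n(i)\}\in U$. For each $i\in F$ the back condition of $h_i$ yields some $l(i)\in X_i$ with $k(i)\,R_i\,l(i)$ and $h_i(l(i))=n(i)$ (a second such use of choice); for $i\notin F$ put $l(i)=c_i$, which is legitimate since each $X_i$ is nonempty. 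Then $k_U$ and $l_U$ are related in $\F_a$ and $f_a(l_U)=n_U$, because both conditions hold on a $U$-large superset of $F$. So the crux is this back-condition bookkeeping, with its two standard appeals to the Axiom of Choice; everything else is routine manipulation of $U$-large subsets of $I$. The lemma then feeds into \eqref{inner} exactly as Lemma~\ref{fineslemma} does, now with $\F_\lk^{\,U}$ in the role of $\F_\lk$, giving $\F_\lk^{\,U}\models L$.
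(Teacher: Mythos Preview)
Your proof is correct and follows essentially the same route as the paper's: apply Lemma~\ref{fineslemma} coordinatewise, take $\F_a=\tprod_U\F_i$, and define $f_a$ as the ultraproduct of the coordinate bounded morphisms. The only difference is that where the paper dismisses the verification that $f_a$ is a well-defined bounded morphism as ``routine ultraproduct analysis'', you spell out the forth and back conditions explicitly; your treatment of these is accurate.
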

\begin{proof}
Let $U$ be an ultrafilter on $I$, and $a=g_U$ for some $g\in X_\lk^{\ \, I}$. For each $i\in I$, $g(i)$ is a member of $\F_\lk$, so by Lemma  \ref{fineslemma} there is a frame $\F_i\in \C$ and a bounded morphism $f_i:\F_i\to\F_\lk$ such that $f_i(\F_i)$ contains $g(i)$.

Let $\F_a$ be the ultraproduct $\Pi_U\F_i$, which belongs to $\C$ as elementary classes are closed under ultraproducts.
Define $f_a:\Pi_U X_i\to  X_\lk^{\ \, U}$ by putting, for each $h\in \Pi_I X_i$,
$$
f_a(h_U)=\langle f_i(h(i)): i\in I\rangle_U.
$$
Routine ultraproduct analysis shows that $f_a$ is a well-defined bounded morphism.
Now for each $i\in I$, $g(i)=f_i(b_i)$ for some $b_i\in X_i$. Define $h$ by $h(i)=b_i$ for all $i$. Then $f_i(h(i))=g(i)$, so
$$
f_a(h_U)=\langle g(i): i\in I\rangle_U=a,
$$
hence $a$ belongs to the image of $f_a$ as required.
\end{proof}
Applying \eqref{inner} to this result, in the way explained above, gives

\begin{corollary}  \label{strongfine}
If $L$ is first-order complete, then every ultrapower of $\F_\lk$ is an $L$-frame.
\qed
\end{corollary}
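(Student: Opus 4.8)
The plan is to run exactly the argument that derived ``$\F_\lk$ is an $L$-frame'' from Lemma \ref{fineslemma}, but now feeding in the ultrapower version just proved in place of Lemma \ref{fineslemma} itself. First I would unpack the hypothesis: ``$L$ is first-order complete'' means $L=L_\C$ for some elementary class $\C$ of frames, and hence every member of $\C$ validates $L$, i.e.\ is an $L$-frame. Then I would fix an arbitrary infinite $\k$ and an arbitrary ultrafilter $U$; the goal is to show $\F_\lk^{\ \,U}\models L$.

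Next I would pick an arbitrary point $a$ of $\F_\lk^{\ \,U}$ and apply the preceding Lemma to obtain a frame $\F_a\in\C$ together with a bounded morphism $f_a:\F_a\to\F_\lk^{\ \,U}$ whose image $f_a(\F_a)$ contains $a$. Since $\F_a\in\C$, it is an $L$-frame; since images of bounded morphisms preserve modal validity, $f_a(\F_a)\models L$, so $f_a(\F_a)$ is an $L$-frame; and by the general property of bounded morphisms recalled above, $f_a(\F_a)$ is an \emph{inner} subframe of $\F_\lk^{\ \,U}$. Thus $a$ lies in an inner subframe of $\F_\lk^{\ \,U}$ that is an $L$-frame. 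As $a$ was arbitrary, property \eqref{inner} (taken with $\F=\F_\lk^{\ \,U}$) then gives $\F_\lk^{\ \,U}\models L$, which is the claim.

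I do not expect any genuine obstacle here: the substantive work has already been done in establishing the ultrapower analogue of Fine's Lemma --- in particular, verifying that $f_a$ is a well-defined bounded morphism and using closure of elementary classes under ultraproducts to keep $\F_a$ inside $\C$. The corollary is just the same short three-line assembly (a bounded-morphism image of an $L$-frame is an $L$-frame, and it is an inner subframe, so invoke \eqref{inner}) that turned Lemma \ref{fineslemma} into the Canonicity Theorem, now applied verbatim with $\F_\lk^{\ \,U}$ in the role of $\F_\lk$. The one point worth stating carefully is that \eqref{inner} requires an $L$-frame inner subframe through \emph{each} point of $\F_\lk^{\ \,U}$, and that is precisely what the ``for any member $a$'' clause of the Lemma supplies.
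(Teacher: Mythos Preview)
Your proposal is correct and matches the paper's approach exactly: the paper proves the corollary simply by the sentence ``Applying \eqref{inner} to this result, in the way explained above,'' where ``the way explained above'' is precisely the argument you spell out (bounded-morphism image of an $L$-frame is an $L$-frame and is an inner subframe, then invoke \eqref{inner} with $\F_\lk^{\ \,U}$ in place of $\F_\lk$).
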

The conclusion of this statement is now so strong that it is equivalent to the hypothesis:

\begin{theorem} \label{firstchar}
For any logic $L$,  if  $\k$ is any infinite cardinal  the following are equivalent.
\begin{enumerate}[\rm(1)]
\item 
Every ultrapower of $\F_\lk$ is an $L$-frame.
\item
$L$ is determined by the elementary class  $El(\F_\lk)=\{\F: \F\equiv\F_\lk\}$.
\item
$L$ is first-order complete.
\end{enumerate}
\end{theorem}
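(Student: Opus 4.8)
The plan is to prove the cycle of implications $(3)\Rightarrow(1)\Rightarrow(2)\Rightarrow(3)$, of which two links are essentially immediate. The implication $(3)\Rightarrow(1)$ is precisely Corollary \ref{strongfine}, so it may be invoked directly. The implication $(2)\Rightarrow(3)$ is immediate from the definition of first-order completeness, since $El(\F_\lk)$ is by construction an elementary class of frames and $(2)$ says $L$ is determined by it. So the only substantive link is $(1)\Rightarrow(2)$, which I would establish by an argument modelled on the proof of Theorem \ref{ElC}.

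For that step, first note that $El(\F_\lk)$, being the smallest elementary class containing the single frame $\F_\lk$, is exactly the class of models of the complete first-order theory of $\F_\lk$, and hence really does coincide with $\{\F:\F\equiv\F_\lk\}$ as the statement records. The inclusion $L_{El(\F_\lk)}\subseteq L$ needs nothing from $(1)$: since $\F_\lk\in El(\F_\lk)$ we have $L_{El(\F_\lk)}\subseteq L_{\F_\lk}$, and the Truth Lemma gives $\F_\lk\models A\Rightarrow A\in L$, i.e.\ $L_{\F_\lk}\subseteq L$. For the reverse inclusion $L\subseteq L_{El(\F_\lk)}$, take $A\in L$ and an arbitrary $\F\equiv\F_\lk$; I must show $\F\models A$. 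By the Keisler--Shelah Ultrapower Theorem there is an ultrafilter $U$ with $\F^U$ isomorphic to $\F_\lk^{\,U}$. Hypothesis $(1)$ says $\F_\lk^{\,U}$ is an $L$-frame, so $\F_\lk^{\,U}\models A$; since modal validity is preserved by isomorphism, $\F^U\models A$; and since ultraroots preserve modal validity, $\F\models A$. Thus $A$ is valid throughout $El(\F_\lk)$, so $L\subseteq L_{El(\F_\lk)}$, giving $L=L_{El(\F_\lk)}$, which is $(2)$.

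I do not expect a genuine obstacle here: the real work --- the ultraproduct-of-bounded-morphisms construction underlying Corollary \ref{strongfine} --- has already been carried out, and what remains is bookkeeping with elementary equivalence together with one application of Keisler--Shelah. The only point that needs a moment's attention is the realisation that replacing $\F_\lk$ by the larger class $El(\F_\lk)$ cannot shrink the determined logic below $L$; this is exactly what the strong conclusion $(1)$ supplies, via the fact that the ultrafilter delivered by Keisler--Shelah may be taken on an arbitrarily large index set while $(1)$ quantifies over all ultrapowers.
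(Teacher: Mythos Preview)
Your proposal is correct and follows essentially the same route as the paper: the same cycle $(3)\Rightarrow(1)\Rightarrow(2)\Rightarrow(3)$, with $(3)\Rightarrow(1)$ by Corollary~\ref{strongfine}, $(2)\Rightarrow(3)$ by definition, and $(1)\Rightarrow(2)$ via Keisler--Shelah plus preservation of validity under isomorphism and ultraroots. Your treatment of $(1)\Rightarrow(2)$ is slightly more explicit than the paper's in separating the two inclusions, but the argument is the same.
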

\begin{proof}
(1) implies (2):  Let $\F\equiv\F_\lk$. By the Keisler-Shelah Theorem, there is an ultrafilter $U$ with $\F^U$ isomorphic to
$\F_\lk^{\ \,U}$. By (1), $\F_\lk^{\ \,U}\models L$. But frame validity is preserved by isomorphism and utraroots, so $\F^U\models L$ and hence $\F\models L$. This shows that every $L$-theorem is valid in all members of  $El(\F_\lk)$. Conversely, a formula valid in $El(\F_\lk)$ is valid in $\F_\lk$, and hence is an $L$-theorem. Therefore (2) holds.

(2) implies (3):  by definition of `first-order complete'.

(3) implies (1): By Corollary \ref{strongfine}.
\end{proof}

For a given $L$, this Theorem holds for each $\k$ separately, so it is natural to wonder if the elementary classes $El(\F_\lk)$ are all the same. This amounts to asking if all of the canonical frames $\F_\lk$ are elementarily equivalent, and thus whether
 $\F_\lk\equiv\F_{L_\om}$ for all infinite $\k$.  The answer is not known.
 
 But something is known about equivalence of canonical frames relative to a restricted form of sentence in the first-order language of frames. Define a sentence to be \emph{quasi--modal} if it has the form $\forall x\rho$, with $\rho$ being a formula that is constructed from
atomic formulas by using only the connectives $\land$ (conjunction), $\lor$
(disjunction), and the {\em bounded\/} universal and existential quantifier forms
$\forall z(yRz\to\tau)$ and $\exists z(yRz\land\tau)$ with $y\ne z$. 
Any first-order sentence whose truth is preserved by inner subframes,  images of bounded morphisms and disjoint unions is logically equivalent to a quasi--modal sentence (\cite{bent:moda76corr}, see also \cite[Theorem 15.15]{bent:moda83} and \cite[\S 4]{gold:vari89}).

In \cite{gold:elem95,gold:elem93,gold:quas01} there is an analysis of quasi-modally definable classes, the varieties they generate, and the logics they determine.\footnote{In  \cite{gold:elem95,gold:elem93} I used the name `pseudo-equational' rather than `quasi-modal', but the latter seems more apposite in the modal context.} Results, for an arbitrary logic $L$, include:
\begin{itemize}
\item 
The canonical frames $\F_\lk$ for infinite $\k$ all satisfy the same quasi--modal sentences.
\item
If $\C_{qm}(L)$ is the class of all frames that satisfy the same quasi--modal sentences as the canonical frames $\F_\lk$, then the modal logic determined by $\C_{qm}(L)$ is the largest sublogic of $L$ that is first-order complete.
\item
If $ L$ is first-order complete, then it is determined by the quasi-modally definable elementary class $\C_{qm}(L)$.
\end{itemize}
Thus  if $L$ is determined by some first-order conditions, then it is determined by the quasi-modal conditions that are satisfied by the canonical frames $\F_\lk$.

\section{Canonical Varieties} \label{varieties}

We now review the algebraic semantics of modal logics (see \cite{gold:alge00} or \cite{blac:moda01}  for more details), and then derive a new characterisation of canonicity of a logic in Theorem \ref{canonicitychar}.

A \emph{modal algebra} $\A=(\B,l)$ consists of a Boolean algebra $\B$ with a function $l:\B\to\B$ that preserves the Boolean meet operation and the greatest element $1_\B$ of $\B$. Each modal formula $A$ may be viewed as a term in the language of 
$\A$, with the propositional variables of $A$ treated as variables ranging over the elements of $\A$, its Boolean connectives interpreted as the corresponding Boolean operations of $\B$, and $\Box$ interpreted as $l$.
If $A$ has $n$ variables, it induces an $n$-ary function on $\A$, and $A$ is said to be \emph{valid in} $\A$, written $\A\models A$,  if this term function takes the constant value $1_\B$. Thus $\A$ validates modal formula $A$ iff it satisfies the \emph{equation} ``$A=1$'' when $A$ is viewed as an algebraic term.

A frame $\F=(X,R)$ has the associated modal algebra $\Cm\F=(\P(X),l_R)$, called the \emph{complex algebra} of $\A$, where 
$\P(X)$ is the Boolean algebra of all subsets of $X$, and 
$$
l_R(Y)=\{a\in X: \forall b(aRb\text{ implies } b\in Y\}.
$$
Then in general, $\F\models A$ iff $\Cm\F\models A$.

In the converse direction, a modal algebra $\A$ has the \emph{canonical frame} $\Cf\A=(X_\B,R_l)$, where $X_\B$ is the set of ultrafilters of $\B$ and $FR_l G$ iff $\{a:la\in F\}\sub G$. 
Let $\Em\A=\Cm\Cf\A$, the complex algebra of the canonical frame $\Cf\A$. $\Em\A$ is the \emph{canonical embedding algebra} of $\A$.
There is an injective modal algebra homomorphism from $\A$ into $\Em\A$, given by the map $a\mapsto\{F\in X_B:a\in F\}$. This embedding of $\A$ into a complex algebra generalises the Stone representation of $\B$, and is due to J{\'o}nsson and Tarski \cite{jons:bool51}, who called $\Em\A$ the \emph{perfect extension} of $\A$.\footnote{The name \emph{canonical embedding algebra} and notation $\Em\A$ is from \cite[\S 2.7]{henk:cyli71}. 
Another common name is \emph{canonical extension}, with the notation $\A^\sig$ also being used.}

The transformations $\F\mapsto\Cm\F$ and $\A\mapsto\Cf\A$ give rise to a ``contravariant duality'' between frames and algebras. A bounded morphism $f:\F\to\F'$ induces a homomorphism $\theta_f:\Cm\F'\to\Cm\F$ in the reverse direction, taking each subset of $\F'$ to its $f$-inverse image. A homomorphism $\theta:\A\to\A'$ induces a bounded morphism
$f_\theta:\Cf\A'\to\Cf\A$ taking each ultrafilter $F$ of $\A'$ to the ultrafilter $\theta^{-1}F$ of $\A$. If $f$ is surjective or injective, then $\theta_f$ is injective or surjective, respectively. Likewise, $f_\theta$ interchanges surjectivity and injectivity with $\theta$. Composing the transformations show that any homorphism $\A\to\A'$ lifts to a homorphism
$\Em\A\to\Em\A'$ that preserves  surjectivity and injectivity.

A class $\V$  of modal algebras is a \emph{variety} if it the class of all models of some class of  equations. By the theorem of Birkhoff \cite{birk:stru35}, this holds iff $\V$ is closed under homomorphic images, subalgebras, and direct products. 
A variety $\V$ has \emph{free} algebras: for any cardinal $\k$ there is an algebra $\A_\V(\k)$ in $\V$ having a subset 
$\{a_\xi:\xi<\k\}$ that generates $\A_\V(\k)$, such that any mapping of this set of generators into any $\V$-algebra $\A$ extends to a homomorphism $\A_\V(\k)\to\A$. Thus any $\V$-algebra of size at most $\k$ is a homomorphic image of the free algebra $\A_\V(\k)$.

The $\Cf$ construction  is evidently an algebraic analogue of the construction of the canonical frame of a logic. The analogy can be made precise. For any modal logic $L$, the class $\V_L$ of algebras that validate all $L$-theorems is a variety, since modal formulas $A$ can be identified with modal-algebra equations $A=1$.
In $\V_L$ a free algebra on $\k$-many generators can be constructed as the standard  \emph{Lindenbaum-Tarski algebra} 
$\A_\lk$  of the $\k$-logic $\lk$.  The elements of this algebra are the equivalence classes $[A]$ of $\k$-formulas under provable material equivalence, i.e. $[A]=[B]$ iff $A\leftrightarrow B$ is an $L$-theorem.  $\A_\lk$ is an $L$-algebra and has the free generating set  $\{[p_\xi]: \xi<\k\}$.

Now if $\Delta$ is a maximally $\lk$-consistent set of $\k$-formulas, then $\{[A]:A\in\Delta\}$ is an ultrafilter of $\A_\lk$, and all ultrafilters of $\A_\lk$ have this form. This provides an isomorphism between the canonical $\lk$-frame $\F_\lk$ and the canonical frame $\Cf\A_\lk$ of the Lindenbaum-Tarski algebra. Hence $\Cm\F_\lk$ is isomorphic to
$\Cm\Cf\A_\lk=\Em\A_\lk$.
In summary:
\begin{quote}\em
The canonical frames $\F_\lk$ for a logic $L$ can be identified with the canonical frames of the infinitely generated free algebras in the variety $\V_L$ of all modal algebras that validate $L$. Moreover, the complex algebra $\Cm\F_\lk$ can be identified with the canonical embedding algebra of the free $\V_L$-algebra on $\k$-many generators.
\end{quote}
(See \cite[\S 5]{gold:alge00} for more detail.)

A variety $\V$ is called \emph{canonical} if it is closed under canonical embedding algebras,
i.e.\ $\Em\A$ belongs to $\V$ whenever $\A$ does. But any $\V$-algebra $\A$ is a homomorphic image of a free $\V$-algebra $\A_\V(\k)$ for some infinite $\k$, and so by duality  $\Em\A$ is a homomorphic image of  $\Em\A_\V(\k)$. Since $\V$ is closed under homomorphic images, this shows that

\begin{quote}
\emph{A variety is canonical if, and only if, it contains the canonical embedding algebras of all its free algebras on  infinitely many generators} \cite[Theorem 3.5.4]{gold:vari89}.
\end{quote}

In the case of the variety $\V_L$ of algebras validating a logic $L$, these observations combine to show that $\V_L$ is canonical iff it contains the complex algebras $\Cm\F_\lk$ for all infinite $\k$. But  $\Cm\F_\lk$ validates $L$ iff the frame $\F_\lk$ validates $L$, so in fact 
\begin{quote}\em
$\V_L$ is a canonical variety if, and only if, $L$ is a canonical logic.
\end{quote}

Our objective now is to give a new characterisation of canonicity of a logic $L$ by validity in the canonical frames 
$\Cf(\A_\lo^{\ \,U})$ of the ultrapowers of the denumerably generated free $L$-algebra. This depends on the relationship between
$\A_\lo$ and the other infinitely generated free algebras $\A_\lk$. Freeness itself gives a surjective homomorphism 
$\A_\lk\twoheadrightarrow\A_\lo$, but there is a deeper relationship:

\begin{lemma} \label{embedpower}
For any infinite cardinal $\k$ there is an injective homomorphism
$$
\theta:\A_\lk \rightarrowtail \A_\lo^{\ \,I}
$$
from $\A_\lk$ into some direct power of $\A_\lo$.
\end{lemma}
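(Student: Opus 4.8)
The plan is to realise $\theta$ as a ``diagonal'' map into a power of $\A_\lo$ indexed by a point-separating family of homomorphisms. Since $\A_\lk$ is the free $\V_L$-algebra on the generating set $\{[p_\xi]:\xi<\k\}$, every assignment of these generators to elements of $\A_\lo$ extends to a homomorphism $\A_\lk\to\A_\lo$; let $I$ be the set of all such homomorphisms and put
$$
\theta:\A_\lk\to\A_\lo^{\,I},\qquad \theta(a)=\langle h(a):h\in I\rangle .
$$
A map into a product is a homomorphism as soon as each of its coordinates is one, so $\theta$ is a homomorphism, and its whole content is injectivity. As $\theta$ is a Boolean-algebra homomorphism it is injective precisely when it sends every nonzero element to a nonzero element (apply the implication $\theta(c)=0\Rightarrow c=0$ to the symmetric difference $c=(a\wedge\neg b)\vee(\neg a\wedge b)$). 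So it suffices to produce, for each nonzero $a\in\A_\lk$, some $h\in I$ with $h(a)\ne0$.

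For the key step, write $a=[A]$ with $A$ an $\lk$-consistent $\k$-formula, and let $p_{\xi_1},\dots,p_{\xi_n}$ (with $\xi_1<\cdots<\xi_n$) list the variables occurring in $A$. Let $\sigma$ be the substitution with $\sigma p_{\xi_j}=p_{j-1}$ for $1\le j\le n$ (and, say, $\sigma p_\xi=p_0$ for all other $\xi$, those values being irrelevant), and let $h\in I$ be the homomorphism determined by $h([p_\xi])=[\sigma p_\xi]$. A routine verification -- by induction on formula structure, or by noting that $[B]\mapsto[\sigma B]$ is a well-defined homomorphism agreeing with $h$ on generators -- shows $h([A])=[\sigma A]$, and $\sigma A$ is an $\om$-formula since it involves only $p_0,\dots,p_{n-1}$. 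It remains to check that $\sigma A$ is $\lo$-consistent, i.e.\ $\neg\sigma A\notin L$. Let $\tau$ be the substitution with $\tau p_{j-1}=p_{\xi_j}$ for $1\le j\le n$. Since $\neg A$ involves only $p_{\xi_1},\dots,p_{\xi_n}$ and the composite $\tau\sigma$ fixes each of these, we have $\tau(\sigma(\neg A))=\neg A$; hence $\neg\sigma A=\sigma(\neg A)\in L$ would force $\neg A=\tau(\sigma(\neg A))\in L$ by closure of $L$ under uniform substitution, contradicting the $\lk$-consistency of $A$. Thus $\sigma A$ is consistent, $h(a)=[\sigma A]\ne0$ in $\A_\lo$, and $\theta$ is injective.

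I do not expect a genuine obstacle here: the argument is essentially the observation -- already implicit in the earlier discussion of the stratification $\{L_\k:\k\ \text{infinite}\}$ -- that renaming the finitely many variables of a formula into an initial segment of $\lo$ is a reversible operation on formulas, hence preserves (in)consistency, now packaged through the freeness of the Lindenbaum--Tarski algebras. The only subtlety worth flagging is that $\sigma$ and $\tau$ need not be mutually inverse as substitutions on all of $\lk$, and the sets $\{\xi_1,\dots,\xi_n\}$ and $\{0,\dots,n-1\}$ may overlap; what is true, and all that is needed, is that $\tau\sigma$ acts as the identity on each variable actually occurring in $A$, which holds because uniform substitution is simultaneous.
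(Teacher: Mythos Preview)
Your argument is correct and follows essentially the same route as the paper: build a point-separating family of homomorphisms $\A_\lk\to\A_\lo$ via variable substitutions, then take the product map. The only cosmetic differences are that the paper indexes $I$ by the nonzero elements of $\A_\lk$ rather than by all homomorphisms, and it chooses the target variables $q_1,\dots,q_n$ to be \emph{disjoint} from $p_{\xi_1},\dots,p_{\xi_n}$, which sidesteps the overlap subtlety you flag (your handling of it via $\tau\sigma$ fixing the variables of $A$ is fine).
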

\begin{proof}
It suffices to show that  for any $a\ne 0$ in $\A_\lk$ there is a homomorphism $\theta_a:\A_\lk\to\A_\lo$ with $\theta_a(a)\ne 0$ in $\A_\lo$. For then, by taking $I$ as the set of non-zero elements of $\A_\lk$, the desired $\theta$ is given by the product map
$$
\theta(b)=\langle \theta_a(b):a\in I\rangle.
$$
So, suppose $a$ is the equivalence class $[A]$ of some $\k$-formula $A$ that has variables $p_{\xi_1},\dots p_{\xi_n}$ from $\L_\k$. Choose a list $q_1,\dots,q_n$ of distinct variables from $\L_\om$ that is disjoint from $p_{\xi_1},\dots p_{\xi_n}$.
Let $\sig$ be any function from $\L_\k$ into $\L_\om$ that has $\sig p_{\xi_i}=q_i$  for all $i\leq n$. Then $\sig$ extends to a substitution function $B\mapsto \sig B$ that maps $\k$-formulas to $\om$-formulas and commutes with the connectives.
Define  $\theta_a([B])=[\sig B]$ (the closure of $L_\k$ under substitution ensures this is well-defined). Then $\theta_a$ is a  homomorphism from $\A_\lk$ to $\A_\lo$.

Now since $[A]\ne 0$, $A$ is $\lk$-consistent, i.e.\ $\neg A$ is not an $\lk$-theorem. Therefore $\neg\sig A$ is not an $\lo$-theorem, or else $\neg A$ would be derivable in $\lk$ by substitution of $p_{\xi_i}$ for $q_i$ in $\neg\sig A$. Hence $\sig A$ is $\lo$-consistent, so $[\sig A]\ne 0$ in $\A_\lo$ as required.
\end{proof}

One other ingredient is required: a result about the canonical frame $\Cf(\Pi_I\A_i)$  of a direct product of algebras  whose dual form was shown by Gehrke and J\'onsson \cite[\S3.4]{gehr:boun04} to hold, not just for BAO's, but for distributive lattices with auxiliary operations. Here we give a proof  due to Ian Hodkinson \cite[Theorem 2.5]{gold:erdo04}, and restrict to the  case of a direct power $\A^I$, since that is all we need.

\begin{lemma} \label{ian}
For any modal algebra $\A$ and any set $I$,  the canonical frame $\Cf( \A^I)$ of the direct power $\A^I$ is isomorphic to the disjoint union of the canonical  frames 
$\Cf (\A^U)$ of the ultrapowers $\A^U$ as $U$ ranges over all ultrafilters on $I$.
\end{lemma}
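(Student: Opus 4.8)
The plan is to exhibit the claimed isomorphism explicitly by tracing through what ultrafilters of the direct power $\A^I$ look like, and matching them with pairs consisting of an ultrafilter $U$ on $I$ together with an ultrafilter of the corresponding ultrapower $\A^U$. The starting point is the well-known description of the Boolean ultrafilters of $\A^I$: each ultrafilter $F$ of $\A^I$ projects to an ultrafilter $U_F=\{J\sub I : \chi_J\in F\}$ on $I$ (where $\chi_J$ is the characteristic tuple of $J$, i.e.\ $1$ on $J$ and $0$ off it), and conversely $F$ is determined by $U_F$ together with the set of $\sim_{U_F}$-classes $\{a_{U_F} : a\in F\}$, which is an ultrafilter of the ultrapower $\A^{U_F}$. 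So first I would set up this correspondence at the level of Boolean algebras, checking that $F\mapsto (U_F, \{a_{U_F}:a\in F\})$ is a bijection between ultrafilters of $\A^I$ and pairs $(U,G)$ with $U$ an ultrafilter on $I$ and $G$ an ultrafilter of $\A^U$. Collecting these over all $U$, the underlying set of $\Cf(\A^I)$ is in bijection with the disjoint union of the underlying sets of the $\Cf(\A^U)$.

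Next I would verify that this bijection respects the accessibility relations, i.e.\ that it is a frame isomorphism and not merely a bijection of points. On $\Cf(\A^I)$ the relation is $FR_lF'$ iff $\{a\in\A^I : l a\in F\}\sub F'$; on each $\Cf(\A^U)$ it is $GR_lG'$ iff $\{b\in\A^U : lb\in G\}\sub G'$. The key observation is that $l$ in $\A^I$ is computed coordinatewise and $l$ in $\A^U$ is induced on $\sim_U$-classes by the same coordinatewise operation, so membership of $la$ in $F$ is detected by membership of the class $(la)_{U_F}=l(a_{U_F})$ in the ultrafilter $\{a_{U_F}:a\in F\}$ of $\A^U$. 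Hence $FR_lF'$ holds iff both project to the same $U$ and the associated ultrafilters of $\A^U$ are $R_l$-related. Since the disjoint union relation relates points only within a single summand, this shows the bijection is a bialgebraic isomorphism of frames. (One should note here that $FR_lF'$ forces $U_F=U_{F'}$: if $J\in U_F$ then $l\chi_J = \chi_J$ because $J$'s characteristic tuple, being $0$ or $1$ in each coordinate and with $l1=1$, $l0$ possibly nonzero --- actually one argues via $\chi_J \wedge \text{something}$; more cleanly, $R_l$ never links distinct Boolean "blocks", which follows from the fact that the central idempotents $\chi_J$ satisfy $l\chi_J\geq \chi_J$ is not automatic, so instead use that $F$ and $F'$ with $FR_lF'$ must agree on which $\chi_J$ they contain because $\chi_J$ and $\chi_{I\setminus J}$ are complementary idempotents and one checks $l(\chi_J)\wedge\chi_{I\setminus J}$ considerations force agreement.)

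The main obstacle I anticipate is precisely this last point: proving cleanly that $R_l$ cannot connect an ultrafilter lying "over" $U$ to one lying "over" a different $U'$, so that the frame really decomposes as a disjoint union and no cross edges appear. The honest way to handle it is to work with the idempotents $\chi_J\in\A^I$ for $J\sub I$: these form a Boolean subalgebra isomorphic to $\P(I)$, an ultrafilter $F$ of $\A^I$ restricts on it to the ultrafilter $U_F$ on $I$, and one must show that if $FR_lF'$ and $J\in U_F$ then $J\in U_{F'}$. This can be extracted from the general theory of canonical frames of products (it is exactly the content of Gehrke--J\'onsson cited just before the lemma), but for a self-contained argument I would compute directly, using that $l$ acts coordinatewise and that $\{a : la\in F\}\sub F'$ together with a suitable choice of $a$ supported on $J$ to deduce $\chi_J\in F'$. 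Once cross edges are ruled out, everything else is bookkeeping, and the stated isomorphism $\Cf(\A^I)\cong\coprod_U \Cf(\A^U)$ follows.
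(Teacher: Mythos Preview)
Your decomposition via $U_F=\{J\sub I:\chi_J\in F\}$ and $G_F=\{a_{U_F}:a\in F\}$ is exactly the paper's. The difference is that the paper packages the relation-compatibility check into duality already on the shelf: the quotient $\A^I\twoheadrightarrow\A^U$ is a surjective homomorphism, so its dual $\ph_U:\Cf\A^U\rightarrowtail\Cf\A^I$ is an injective bounded morphism, and the image of any bounded morphism is automatically an \emph{inner} subframe of the target. That single word ``inner'' is your entire ``no cross edges'' claim, obtained for free. The paper then only needs to check that the images $\Im\ph_U$ are pairwise disjoint and cover $\Cf\A^I$, both of which are purely Boolean facts about the $\chi_J$ and are handled just as you sketch.

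Your direct verification is also viable, but you talked yourself out of the computation that finishes it. The inequality $l\chi_J\geq\chi_J$ \emph{is} automatic: coordinatewise, $(l\chi_J)(i)=l1=1\geq 1$ for $i\in J$ and $(l\chi_J)(i)=l0\geq 0$ for $i\notin J$. Hence $\chi_J\in F$ gives $l\chi_J\in F$, and then $FR_lF'$ (i.e.\ $\{a:la\in F\}\sub F'$) yields $\chi_J\in F'$. So $U_F\sub U_{F'}$, and since both are ultrafilters on $I$, $U_F=U_{F'}$. That one line is the gap in your write-up; once it is in place, your argument is the paper's proof with the duality step unwound by hand.
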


\begin{proof}
For each ultrafilter $U$ on $I$, the quotient map $g\mapsto g_U$ is a homomorphism from $\A^I$ onto $\A^U$, inducing by duality an injective bounded morphism
$$
\ph_U:\Cf\A^U\rightarrowtail\Cf\A^I
$$
whose image $\Im\ph_U$ is an inner subframe of $\Cf\A^I$.

Now the frames $\Im \ph_U$ for all ultrafilters $U$ on $I$ form a family of pairwise disjoint inner subframes that covers $ \Cf\A^I $, i.e.\ $\Cf\A^I$ is their disjoint union. Since each $\Cf\A^U$ is isomorphic to $\Im\ph_U$, the Lemma follows from this.

For the details: if $J\sub I$, let $\chi_J\in \A^I$ be the characteristic function of $J$. We have:
\begin{equation}  \label{charJinF}
\text{if $J\in U$ and $F\in \Im \ph_U$,   then $\chi_J\in F$.}
\end{equation}
For, if $J\in U$, then $(\chi_J)_U=1$ in $\A^U$, so for any ultrafilter $G\in\Cf\A^U$, 
$(\chi_J)_U\in G$ and so  $\chi_J\in \ph_U(G)$.

This implies that $\Im \ph_U$ and $\Im \ph_{U'}$ are disjoint when $U\ne U'$. For in that case there is some $J\in U$ with $I-J\in U'$, and if there were some $F$ in both $\Im \ph_U$ and $\Im \ph_{U'}$,  result \eqref{charJinF} would imply that both $\chi_J$ and $\chi_{I-J}$ were in the ultrafilter $F$. But $\chi_{I-J}$ is the Boolean complement of $\chi_J$ in $\A^I$, so this is impossible.

Moreover, each $F\in\Cf\A^I$ belongs to $\Im \ph_U$ for some $U$, namely 
$U=\{J:\chi_J\in F\}$, which is an ultrafilter on $I$. The set $G=\{g_U: g\in F\}$ is readily seen to be an ultrafilter of $\A^U$, with the one part of this that depends on the particular definition of $U$ being that $G$ is proper, i.e.\ $0\notin G$. But if $g\in F$, let $J=\{i\in I: g(i)\ne 0\}$. Then $\chi_J$ dominates $g$, so also belongs to $F$, hence $J\in U$ and so $g_U\ne 0$ as required. Thus $G\in\Cf\A^U$, and as 
$F\sub\{g\in A^I: g_U\in G\}= \ph_U(G)$, maximality of $F$ ensures that $F=\ph_U(G)\in\Im \ph_U$.

This confirms that the frames $\Im \ph_U$  cover
$ \Cf\A^I $ as claimed.
\end{proof}

\begin{theorem} \label{canonicitychar}
For any logic $L$, the following are equivalent.
\begin{enumerate}[\rm(1)]
\item 
$L$ is canonical.
\item 
The frame $\Cf(\A_\lo^{\ \,U})$ validates $L$ for every ultrafilter $U$ on any set.
\item
The frame $\Cf(\A_\lo^{\ \,I})$ validates $L$ for every set $I$.
\end{enumerate}
\end{theorem}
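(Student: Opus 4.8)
The plan is to prove the cycle of implications $(1)\Rightarrow(3)\Rightarrow(2)\Rightarrow(1)$, using Lemma \ref{ian} to pass between direct powers and ultrapowers, and Lemma \ref{embedpower} to reach all the canonical frames $\F_\lk$ from powers of $\A_\lo$. Recall from Section \ref{varieties} that $L$ is canonical iff $\F_\lk\models L$ for all infinite $\k$, equivalently iff $\Em\A_\lk=\Cm\Cf\A_\lk\models L$ for all infinite $\k$.

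For $(3)\Rightarrow(2)$: fix an ultrafilter $U$ on a set $I$. The quotient map $\A_\lo^{\,I}\twoheadrightarrow\A_\lo^{\,U}$ dualizes to the injective bounded morphism $\ph_U:\Cf(\A_\lo^{\,U})\rightarrowtail\Cf(\A_\lo^{\,I})$ of Lemma \ref{ian}, whose image is an inner subframe of $\Cf(\A_\lo^{\,I})$. Since inner subframes preserve validity, $\Cf(\A_\lo^{\,I})\models L$ gives $\Im\ph_U\models L$, hence $\Cf(\A_\lo^{\,U})\models L$. (Equivalently: by Lemma \ref{ian}, $\Cf(\A_\lo^{\,I})$ is the disjoint union of the $\Cf(\A_\lo^{\,U})$, and disjoint unions validate exactly the formulas valid in every summand.)

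For $(2)\Rightarrow(1)$: let $\k$ be an infinite cardinal; I must show $\F_\lk\models L$, i.e.\ $\Em\A_\lk\models L$. By Lemma \ref{embedpower} there is an injective homomorphism $\theta:\A_\lk\rightarrowtail\A_\lo^{\,I}$ for some $I$. By the duality of Section \ref{varieties}, $\theta$ lifts to a homomorphism $\Em\A_\lk\to\Em(\A_\lo^{\,I})$, and because $\theta$ is injective this lifted map is surjective; equivalently, the induced bounded morphism $\Cf(\A_\lo^{\,I})\to\Cf\A_\lk$ is surjective. Now by Lemma \ref{ian}, $\Cf(\A_\lo^{\,I})$ is the disjoint union of the frames $\Cf(\A_\lo^{\,U})$ over all ultrafilters $U$ on $I$; by hypothesis (2) each of these validates $L$, so the disjoint union validates $L$. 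Surjective bounded morphisms preserve validity, so $\Cf\A_\lk=\F_\lk$ (up to isomorphism) validates $L$. As $\k$ was arbitrary, $L$ is canonical.

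For $(1)\Rightarrow(3)$: assume $L$ is canonical. Fix a set $I$ and put $\k=|\A_\lo^{\,I}|$ (an infinite cardinal). Then $\A_\lo^{\,I}$ is a $\V_L$-algebra of size at most $\k$, hence a homomorphic image of the free algebra $\A_\lk=\A_{\V_L}(\k)$ via some surjection $\A_\lk\twoheadrightarrow\A_\lo^{\,I}$. Dualizing, this gives an injective bounded morphism $\Cf(\A_\lo^{\,I})\rightarrowtail\Cf\A_\lk=\F_\lk$ whose image is an inner subframe of $\F_\lk$. Since $L$ is canonical, $\F_\lk\models L$; inner subframes preserve validity, so $\Cf(\A_\lo^{\,I})\models L$, which is (3). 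The main thing to get right is bookkeeping with the duality—tracking that injective homomorphisms dualize to surjective bounded morphisms and vice versa, and that each algebra is a homomorphic image of a sufficiently large free algebra—together with the correct invocation of Lemma \ref{ian}; I expect no genuine obstacle, only the need to state these dualities cleanly.
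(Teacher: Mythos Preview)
Your proof is correct and uses the same ingredients as the paper---Lemma~\ref{embedpower}, Lemma~\ref{ian}, and the duality between injective/surjective homomorphisms and surjective/injective bounded morphisms---only traversing the cycle in the opposite direction $(1)\Rightarrow(3)\Rightarrow(2)\Rightarrow(1)$ rather than $(1)\Rightarrow(2)\Rightarrow(3)\Rightarrow(1)$. The one mild difference is in how you handle the step out of canonicity: the paper's $(1)\Rightarrow(2)$ invokes the variety formulation (canonicity of $L$ means $\V_L$ is closed under $\Em$, and ultrapowers preserve equations), while your $(1)\Rightarrow(3)$ stays at the frame level by realising $\A_\lo^{\,I}$ as a quotient of a large free algebra and dualising to an inner subframe of $\F_\lk$; both are perfectly valid and amount to the same underlying idea.
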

\begin{proof}\strut

(1) implies (2): If $L$ is canonical, then its variety $\V_L$ is closed under $\Em$. Now $\A_\lo$ is in $\V_L$, hence so is any of its ultrapowers $\A_\lo^{\ \,U}$ (as ultrapowers preserve equations). Thus $\V_L$ contains
$\Em(\A_\lo^{\ \,U})=\Cm\Cf(\A_\lo^{\ \,U})$. So the algebra $\Cm\Cf(\A_\lo^{\ \,U})$ validates $L$, therefore so too does the frame $\Cf(\A_\lo^{\ \,U})$.

(2) implies (3):
Suppose $\Cf(\A_\lo^{\ \,U})\models L$ for all ultrafilters $U$. Then as disjoint unions and isomorphisms preserve validity, Lemma \ref{ian} implies that for any set $I$,  $\Cf( \A_\lo^{\ \,I})\models L$.

(3) implies (1):
For each infinite $\k$, by Lemma \ref{embedpower} there is a set $I$ and an injective homomorphism
$\A_\lk \rightarrowtail \A_\lo^{\ \,I}$. By duality, this induces a surjective bounded morphism
$\Cf(\A_\lo^{\ \,I})\twoheadrightarrow \Cf(\A_\lk)$. If (3) holds, then as validity is preserved by images of bounded morphisms, it follows that
$ \Cf(\A_\lk)\models L$. But $ \Cf(\A_\lk)$ is isomorphic to the canonical $\k$-frame $\F_\lk$, so this proves that $L$ is valid in all its canonical $\k$-frames, i.e.\ is canonical.
\end{proof}

Our results reveal a tight relationship between the notions `first-order complete' and `canonical'. The case $\k=\om$ of Theorem \ref{firstchar}, and the isomorphism of $\F_\lo$ with $\Cf\A_\lo$, show that
\begin{equation}  \label{focchar}
\textit{$L$ is first-order complete  iff  \,$(\Cf\A_\lo)^U\models L$  for all ultrafilters $U$;}
\end{equation}
while Theorem \ref{canonicitychar} just proved shows that
\begin{equation}  \label{conchar}
\textit{$L$ is canonical iff  \,$\Cf(\A_\lo^{\ \,U})\models L$  for all ultrafilters $U$.}
\end{equation}
So the difference between the two notions comes down to the fact that \emph{formation of canonical frames need not commute with ultrapowers}:  the frames $(\Cf\A_\lo)^U$ and $\Cf(\A_\lo^{\ \,U})$ need not be isomorphic. They need not even be of the same size:

\begin{theorem}\label{bigger}
If $U$ is a nonprincipal ultrafilter on some countable set $I$, then  $(\Cf\A_\lo)^U$ is of size $\cont$, the cardinal of the continuum, while  $\Cf(\A_\lo^{\ \,U})$  is of size $2^\cont$.
\end{theorem}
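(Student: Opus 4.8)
The plan is to compute the two cardinalities separately, using on one side the standard cardinal arithmetic of ultrapowers of a countable structure, and on the other side the structure of $\A_\lo^{\ \,U}$ together with the fact that the canonical frame of an algebra $\B$ has as many points as $\B$ has ultrafilters.

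First I would dispose of $(\Cf\A_\lo)^U$. The algebra $\A_\lo$ is the denumerably generated free $L$-algebra, hence countable (the set $\L_\om$ of $\om$-formulas is countable, and its quotient by provable equivalence is at most countable, and at least countable since, e.g., the classes $[p_0],[p_1],\dots$ are distinct unless $L$ is inconsistent — and if $L$ is inconsistent both frames are empty and the statement is vacuous, so I assume $L$ consistent). Its canonical frame $\Cf\A_\lo=\F_\lo$ has as underlying set the set of ultrafilters of a countable Boolean algebra, which has cardinality at most $\cont$; and it has cardinality exactly $\cont$ whenever $\A_\lo$ is infinite, since a countably infinite Boolean algebra has $\cont$ ultrafilters (its Stone space is a nonempty compact metrizable space without isolated points — here I would note the free algebra on infinitely many generators is atomless — so it is homeomorphic to the Cantor set). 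Then $(\Cf\A_\lo)^U$ is the ultrapower of a structure of size $\cont$ modulo a nonprincipal ultrafilter on a countable set, and a standard computation (\cite[\S 6.3]{chan:mode73}) gives that such an ultrapower of an infinite structure of size $\le\cont$ has size exactly $\cont$: it is at most $\cont^{\aleph_0}=\cont$, and at least $\cont$ because the structure embeds in its ultrapower.

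Next, the harder side: $\Cf(\A_\lo^{\ \,U})$. The key point is that $\A_\lo^{\ \,U}$, the ultrapower of a countably infinite Boolean-algebra-with-operator modulo a nonprincipal ultrafilter on a countable set, still has cardinality $\cont$ (same computation as above), but it is no longer countable, and crucially its Boolean reduct is a $|I|$-saturated, hence atomless, Boolean algebra of size $\cont$ that contains an independent family of size $\cont$ — indeed the ultrapower of an atomless Boolean algebra is again atomless, and saturation forces it to realise many types. A Boolean algebra of cardinality $\cont$ has at most $2^\cont$ ultrafilters, giving the upper bound $|\Cf(\A_\lo^{\ \,U})|\le 2^\cont$. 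For the lower bound I would exhibit an independent family $\{b_j : j<\cont\}$ in the Boolean reduct of $\A_\lo^{\ \,U}$ — this is where saturation of the ultrapower (equivalently, $\aleph_1$-saturation, which a countably-indexed nonprincipal ultrapower of a countable structure enjoys) is used to guarantee that the finitely-satisfiable type asserting independence of continuum-many elements is in fact realised, or alternatively one invokes directly the known fact that a nonprincipal ultrapower over $\om$ of the countable atomless Boolean algebra has an independent subset of size $\cont$. From such an independent family one reads off $2^\cont$ distinct ultrafilters, one for each function $\cont\to 2$ specifying whether to include $b_j$ or its complement, each such choice generating a proper filter by independence, and extending to an ultrafilter; distinct choices give distinct ultrafilters. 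Hence $|\Cf(\A_\lo^{\ \,U})|\ge 2^\cont$, and combined with the upper bound we get equality.

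I expect the main obstacle to be the lower bound $2^\cont$ for $|\Cf(\A_\lo^{\ \,U})|$, i.e.\ producing an independent family of size $\cont$ inside the Boolean reduct of the ultrapower. The clean way is to note that $\A_\lo^{\ \,U}$ is $\aleph_1$-saturated as a first-order structure (a nonprincipal ultrapower over a countable index set of a countable structure), so its Boolean reduct is an $\aleph_1$-saturated atomless Boolean algebra of size $\cont$; one then checks that such an algebra must contain an independent family of full size $\cont$, for instance by a transfinite construction of length $\cont$ that at each stage uses $\aleph_1$-saturation (more than enough, since only countably many conditions are active at once under a careful bookkeeping) to find a new element independent over the countably-many already chosen that are `relevant', or by citing the characterisation of saturated Boolean algebras. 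An alternative, perhaps shorter, route avoids algebra: since $\A_\lo$ is infinite, the countable atomless Boolean algebra embeds in (the Boolean reduct of) $\A_\lo$, its ultrapower embeds in $\A_\lo^{\ \,U}$, and it is classical that the ultrapower of the countable atomless Boolean algebra modulo a nonprincipal ultrafilter on $\om$ already has an independent set of size $\cont$; pulling this back through the embedding finishes the argument. Either way, once the independent family is in hand the counting of ultrafilters is routine, and the matching of the two bounds completes the proof.
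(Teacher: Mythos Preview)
Your computation of $|(\Cf\A_\lo)^U|=\cont$ matches the paper's exactly. For the harder side, the paper takes a more direct path than either of your routes: rather than invoking saturation or citing a classical fact, it simply observes that the generating set $P=\{[p_\xi]:\xi<\omega\}$ is already an independent subset of $\A_\lo$, and then passes to its \emph{enlargement}
\[
{}^*P=\{f_U\in\A_\lo^{\ \,U}:\{i\in I:f(i)\in P\}\in U\}
\]
inside the ultrapower. Independence of ${}^*P$ follows from the first-order equivalence of $(\A_\lo,P)$ and $(\A_\lo^{\ \,U},{}^*P)$, and $|{}^*P|=|P^U|=\cont$ by the same cardinality fact you use elsewhere. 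That gives the $2^\cont$ ultrafilters immediately. Your Route~B is this argument wearing a hat: the ``classical fact'' about the ultrapower of the countable atomless Boolean algebra is most cleanly proved precisely by enlarging a countable independent set, so you are effectively reproducing the paper's proof once removed.

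Your Route~A, on the other hand, has a real gap. With only $\aleph_1$-saturation available, the transfinite construction of an independent family stalls at stage $\omega_1$: to adjoin a new element independent of $\{b_j:j<\alpha\}$ you must realise the type asserting $b_\alpha^{\varepsilon}\wedge\bigwedge_{j\in F}b_j^{\varepsilon_j}\ne 0$ for every finite $F\subseteq\alpha$ and every sign pattern, and once $\alpha\ge\omega_1$ that is an uncountable type. Your parenthetical ``careful bookkeeping'' does not get around this, and it is not true in general that an $\aleph_1$-saturated atomless Boolean algebra of size $\cont$ must contain an independent family of size $\cont$ absent CH. The paper's enlargement trick avoids the problem entirely by producing all $\cont$ independent elements at once rather than one at a time.
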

\begin{proof}
(In brief.)  The Lindenbaum-Tarski algebra $ \A_\lo$ is countable and atomless, so has exactly $\cont$ ultrafilters 
\cite[\S 1.7]{bell:mode69}. Thus  $\Cf\A_\lo$ is of size $\cont$. Since $\cont^\om=\cont$, it follows that $(\Cf\A_\lo)^U$ is also of size $\cont$ \cite[6.3.4]{bell:mode69}.

The set $P=\{[p_\xi]:\xi<\om\}$ of generators of $\A_\lo$ is \emph{independent}, meaning that if $X$ and $Y$ are any disjoint finite subsets of $P$, then $X\cup\{-a:a\in Y\}$ has non-zero meet. Within the ultrapower $\A_\lo^{\ \,U}$, the infinite  set $P$ gives rise to the `enlargement'
$$
^*P=\{f_U\in\A_\lo^{\ \,U}: \{i\in I:f(i)\in P\}\in U\}.
$$
$^*P$ is an independent subset of $\A_\lo^{\ \,U}$, as may be checked by standard ultrapower reasoning, or the fact that the structures   
$(\A_\lo,P)$ and $(\A_\lo^{\ \,U},{}^*P)$ are first-order equivalent. Hence for each $X\sub {}^*P$, the set $X\cup\{-a:a\in{}^*P -X\}$ has the finite intersection property, and so extends to an ultrafilter $F_X$ of $\A_\lo^{\ \,U}$. Then $X\ne Y$ implies $F_X\ne F_Y$, so this shows that  $\A_\lo^{\ \,U}$ has at least as many ultrafilters as there are subsets of $^*P$.

Now the assumptions on $U$ and $I$ imply that for any countably infinite set $Z$, the ultrapower $Z^U$ is of size $\cont$
 \cite[6.3.13]{bell:mode69}. So $P^U$ and $\A_\lo^{\ \,U}$ are of size $\cont$. The inclusion $P\hookrightarrow \A_\lo$ induces an injection $P^U\rightarrowtail \A_\lo^{\ \,U}$ whose image is $^*P$, so  $^*P$ is also of size $\cont$.
 Thus $\A_\lo^{\ \,U}$ has at least $2^\cont$ ultrafilters. Since it cannot have more, we conclude that $\Cf(\A_\lo^{\ \,U})$  is of size $2^\cont$.
 \end{proof}

The fact that there are logics $L$ that are canonical but not first-order complete means, by  \eqref{focchar} and \eqref{conchar}, that there are cases where $\Cf(\A_\lo^{\ \,U})\models L$ but $(\Cf\A_\lo)^U\not\models L$. In that case there is no structural relationship that preserves modal validity from  $\Cf(\A_\lo^{\ \,U})$ to $(\Cf\A_\lo)^U$.

In general there does exist an injective function
$$
(\Cf\A)^U \rightarrowtail  \Cf(\A^{U})
$$
making $(\Cf\A)^U$ isomorphic to a subframe of $\Cf(\A^{U})$, but not necessarily to an \emph{inner} subframe. The injection maps an element $g_U$ of $(\Cf\A)^U$ to the ultrafilter
$\{h_U \in \A^{U} : \{i\in  I : h(i) \in g(i)\} \in U\}$ of $\A^U$.

This has a bearing on the notion of a \emph{subframe logic}, introduced by Fine \cite{fine:logi85} and characterised as one whose validity is preserved by all subframes, not just the inner ones. He showed that if a subframe logic extending K4 is canonical, then it must be first-order complete. Wolter \cite{wolt:stru97} removed the restriction to logics with transitive frames, showing that every canonical subframe logic is first-order complete. That fact now follows immediately from our present observations, for if $L$ is a subframe logic, then the injection of $(\Cf\A_\lo)^U$ onto a subframe of $\Cf(\A_\lo^{\ \,U})$ ensures that if
$\Cf(\A_\lo^{\ \,U})\models L$ then $(\Cf\A_\lo)^U\models L$, which by \eqref{focchar} and \eqref{conchar} ensures  that if $L$ is canonical then it is also first-order complete.

\end{document}